\newcommand\Z{{\mathbb Z}}
\newcommand\R{{\mathbb R}}
\newcommand\T{{\mathbb T}}
\numberwithin{equation}{section}
\begin{document}
	
	\newtheorem{example}{Example}[section]
	\newtheorem{lemma}{Lemma}[section]
	\newtheorem{thm}{Theorem}[section]
	\newtheorem{prop}[lemma]{Proposition}
	\newtheorem{cor}{Corollary}[section]
	
	\theoremstyle{remark}
	\newtheorem{remark}{Remark}[section]
	\newtheorem{definition}{Definition}[section]
	
\author{Alex Barron}

\title{An $L^4$ maximal estimate for quadratic Weyl sums}

\maketitle

\newcommand{\Addresses}{{
		\bigskip
		\footnotesize

		\textsc{Department of Mathematics, University of Illinois Urbana-Champaign,
			Urbana, IL 61801, USA}\par\nopagebreak
		\textit{E-mail address}: \texttt{aabarron@illinois.edu} }}

\begin{abstract}
	We show that $$\bigg\|\sup_{0 < t < 1} \big|\sum_{n=1}^{N} e^{2\pi i (n(\cdot) + n^2 t)}\big| \bigg\|_{L^{4}([0,1])} \leq C_{\epsilon} N^{3/4 + \epsilon}$$ and discuss some applications to the theory of large values of Weyl sums. This estimate is sharp for quadratic Weyl sums, up to the loss of $N^{\epsilon}$.
\end{abstract}

\section{Introduction} In this paper we prove the following maximal estimate for quadratic Weyl sums. \begin{thm}\label{prop:WeylMaximal}  For any $\epsilon > 0$ there exists $C_{\epsilon} > 0$ such that \begin{equation} \label{eq:mainMax} \bigg\|\sup_{0 < t < 1} \big|\sum_{n=1}^{N} e^{2\pi i (n(\cdot) + n^2 t)}\big| \bigg\|_{L^{4}([0,1])} \leq C_{\epsilon} N^{3/4 + \epsilon}.\end{equation}
\end{thm}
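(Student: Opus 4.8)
The plan is to treat $t$ as the ``main'' variable and to run a circle-method decomposition in it. Write $f_t(x)=\sum_{n=1}^N e^{2\pi i(nx+n^2t)}$; since $n^2\in\Z$, $f_t$ is $1$-periodic in $t$ and jointly continuous, so the supremum in \eqref{eq:mainMax} causes no measurability issue and it suffices to prove $\int_0^1\sup_{0<t<1}|f_t(x)|^4\,dx\lesssim_\epsilon N^{3+\epsilon}$. Set $Q=N^{3/2}$; for each $(a,q)=1$ with $q\le N^{1/2}$ let $\mathfrak M_{a,q}=\{t:|t-a/q|\le (qQ)^{-1}\}$, and let $\mathfrak m=(0,1)\setminus\bigcup\mathfrak M_{a,q}$. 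By Dirichlet's theorem every $t\in\mathfrak m$ has a reduced approximation $a/q$ with $N^{1/2}<q\le Q$ and $|t-a/q|\le q^{-2}$, whence the degree-two Weyl inequality --- which is insensitive to the linear phase $nx$ --- gives $\sup_{t\in\mathfrak m}|f_t(x)|\lesssim_\epsilon N^{1+\epsilon}\bigl(q^{-1}+N^{-1}+qN^{-2}\bigr)^{1/2}\lesssim_\epsilon N^{3/4+\epsilon}$ uniformly in $x$. Thus $\bigl\|\sup_{t\in\mathfrak m}|f_t|\bigr\|_{L^4([0,1])}\lesssim_\epsilon N^{3/4+\epsilon}$, and everything comes down to the major arcs.

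On $\mathfrak M_{a,q}$ write $t=a/q+\beta$, $|\beta|\le (qQ)^{-1}$, and split $n=qm+r$, $0\le r<q$. Using $n^2t\equiv r^2a/q+(qm+r)^2\beta\pmod 1$ one gets $f_t(x)=\sum_{r=0}^{q-1}e^{2\pi i a r^2/q}\,e^{2\pi i(rx+r^2\beta)}\sum_{m\in I_r}e^{2\pi i(m(qx+2qr\beta)+m^2q^2\beta)}$, a sum of $q$ quadratic Weyl sums of length $\lesssim N/q$ in the rescaled variables. The choice $Q=N^{3/2}$ makes these narrow arcs: the rescaled time satisfies $|q^2\beta|\cdot(N/q)=q|\beta|N\le N^{-1/2}$, so a \emph{single} van der Corput $B$-process (equivalently, one Poisson step plus stationary phase) evaluates each inner sum, producing a Gaussian amplitude of size $\asymp |q^2\beta|^{-1/2}$ supported at a single stationary mode, which is present only when $x$ lies within $O\bigl(q|\beta|N\bigr)$ of $\tfrac1q\Z$; away from that zone the inner sum is controlled by its boundary terms. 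Summing the resulting expression over $r$ collapses the $r$-dependence into a complete or incomplete Gauss sum $\sum_{r\bmod q}e^{2\pi i(ar^2+\ell r)/q}$, of modulus $\lesssim \sqrt q\log q$ with a constant \emph{independent of $a$}; and the residual range $|\beta|\lesssim N^{-2}$ is governed directly by $f_{a/q}(x)$, a Gauss sum times a Dirichlet kernel in $qx$. The outcome is a pointwise estimate, uniform in $x$ and in $a$:
$$\sup_{(a,q)=1}\ \sup_{t\in\mathfrak M_{a,q}}|f_t(x)|\ \lesssim_\epsilon\ N^\epsilon\,\Phi_q(x),$$
where $\Phi_q$ is an explicit $\tfrac1q$-periodic profile of generic size $\asymp N^{1/2}$ which peaks like $\min\bigl(Nq^{-1/2},\ \sqrt{N}\,\|qx\|^{-1/2}\bigr)$ near the rationals $\tfrac1q\Z$ ($\|\cdot\|=$ distance to $\Z$); the point of the computation is that this profile obeys $\int_0^1\Phi_q(x)^4\,dx\lesssim N^{3}/q$.

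Granting this, the theorem follows by summation. Since $(0,1)=\mathfrak m\cup\bigcup_{q\le N^{1/2}}\bigcup_{(a,q)=1}\mathfrak M_{a,q}$, pointwise we have $\sup_{0<t<1}|f_t(x)|^4\le \sup_{t\in\mathfrak m}|f_t(x)|^4+\sum_{q\le N^{1/2}}\bigl(\sup_{(a,q)=1}\sup_{t\in\mathfrak M_{a,q}}|f_t(x)|\bigr)^4$. Integrating in $x$ and inserting the minor-arc bound and the major-arc bound above,
$$\int_0^1\sup_{0<t<1}|f_t(x)|^4\,dx\ \lesssim_\epsilon\ N^{3+\epsilon}+N^\epsilon\sum_{q\le N^{1/2}}\int_0^1\Phi_q(x)^4\,dx\ \lesssim_\epsilon\ N^{3+\epsilon}+N^\epsilon\sum_{q\le N^{1/2}}\frac{N^3}{q}\ \lesssim_\epsilon\ N^{3+\epsilon}\log N,$$
which is \eqref{eq:mainMax} after adjusting $\epsilon$. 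The essential feature of this last step is that the supremum over $a$ is taken \emph{before} the sum over $q$: for a fixed modulus $q$ there are $\varphi(q)\asymp q$ arcs, so a per-arc bound depending on $a$ would cost a further factor $q$ and destroy the estimate; it is the $a$-independence of the Gauss-sum modulus $\sqrt q$ that makes the arithmetic add up.

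I expect the proof of the displayed major-arc estimate to be the crux. Two things will need real care. First, the stationary-phase/$B$-process analysis of the inner sum must be carried out with explicit control of the error terms, of the incomplete Gauss sums coming from the fact that the index sets $I_r$ need not all equal $[1,N/q]$, and of the degenerate case $q$ even, where the relevant Gauss sums can vanish or have size $\sqrt{2q}$. Second, one must verify that $\Phi_q$ genuinely decays like $\|qx\|^{-1/2}$ (not merely $\|qx\|^{-1}$) on an intermediate zone around $\tfrac1q\Z$: this faster decay is exactly what is needed for $\int_0^1\Phi_q^4$ to be $O(N^3/q)$ after the change of variables $y=qx$, and it reflects the fact that, as $t$ ranges over $\mathfrak M_{a,q}$, the phase $nx+n^2t$ genuinely acquires a stationary point --- the mechanism responsible for the $L^4$ exponent (rather than a smaller one) in the theorem. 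By comparison, the elementary bound $\|f\|_{L^4([0,1]^2)}\lesssim N^{1/2}$, which follows from the fact that $n_1+n_2=n_3+n_4$ and $n_1^2+n_2^2=n_3^2+n_4^2$ force $\{n_1,n_2\}=\{n_3,n_4\}$, is the ``$t$-averaged'' analogue that the maximal estimate upgrades, and can serve as a consistency check.
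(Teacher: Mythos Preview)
Your approach is genuinely different from the paper's. The paper does not run a single-scale circle method; it argues by induction on $N$ in the style of Du and Zhang. After passing to a level set $\{x:\sup_t|f_t(x)|\sim N^\alpha\}$ and covering it by a one-dimensional family of thin rectangles in $(x,t)$-space, the paper sorts the rectangles by the denominator $q$ of the rational best approximating the time coordinate. For $q\le N^{\delta}$ it invokes, as a black box, the local-in-time maximal estimate of Moyua and Vega ($\|\sup_{|t|<\eta}|u|\|_{L^4}\lesssim N^{1/2+\epsilon}\max(N^{-1},\eta)^{1/4}\|u(\cdot,0)\|_{L^2}$ for \emph{general} solutions). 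For $q>N^{\delta}$ it performs your mod-$q$ decomposition, discards the cross terms $2rlq\beta$ (which are small because $q\ge N^\delta$ forces $|\beta|$ tiny), rescales $(x,t)\mapsto(qx,q^2t)$ to get a Weyl sum at scale $N/q$, and closes by the inductive hypothesis; the one-dimensionality of the rectangle family controls overlap after rescaling. So the paper never needs an $x$-dependent pointwise bound on a fixed major arc.

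Your scheme trades the induction and the Moyua--Vega input for a single explicit major-arc computation. The minor-arc step is fine, and the summation at the end is correctly organised (taking $\sup_a$ before $\sum_q$ is exactly right). Everything therefore rests on the unproved profile bound $\sup_{(a,q)=1}\sup_{t\in\mathfrak M_{a,q}}|f_t(x)|\lesssim_\epsilon N^\epsilon\Phi_q(x)$ with $\int\Phi_q^4\lesssim N^3/q$. The stationary-phase heuristic is sound: one can check that when each inner sum $W_r$ has a stationary point, the $r$-dependent stationary phases combine with the prefactors $e^{2\pi i(ar^2/q+rx)}$ to leave precisely a Gauss sum $S(a,\lfloor qx\rfloor;q)$, and optimising over $\beta$ at fixed $\|qx\|$ recovers the $\sqrt{N/\|qx\|}$ decay. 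What is not yet under control is the complementary regime $\|qx\|\gtrsim N^{-1/2}$, where no stationary point lies in range on $\mathfrak M_{a,q}$; there the inner sums are governed by boundary terms whose phases depend on $r$ in a less structured way, and you must still extract square-root cancellation in the $r$-sum (a trivial bound loses $\sqrt q$ and kills the $N^3/q$ integral). The same issue recurs for the Poisson error terms and for the transition zone where the stationary point sits near an endpoint of $I_r$. This is precisely where the paper's introductory remark that ``classical Weyl estimates do not give good information'' when $x$ is off the major arcs bites; the inductive proof sidesteps it, while yours must confront it. If you can supply a rigorous $\Phi_q$ bound --- the Hardy--Littlewood or Fiedler--Jurkat--K\"orner approximate functional equation for theta sums packages exactly this kind of analysis --- the remainder of your argument goes through and would yield a more self-contained, if more computational, proof than the paper's.
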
 \noindent Some applications of this result are discussed below in Section \ref{sec:application}. The dependence on $N$ is sharp up to the loss of $N^{\epsilon}$, as can be seen by observing that the supremum is essentially $N$ for each $x\in[0, 10^{-6} N^{-1}]$. A similar argument shows that we cannot take $p > 4$ in \eqref{eq:mainMax} without increasing the loss in $N$. Moreover, there exists a set $E \subset [0,1]$ with $|E| \gtrsim 1$ and $\sup_{t}|\sum_{n=1}^{N} e^{2\pi i (nx + n^2 t)}| \gtrsim N^{3/4}$ when $x\in E$ (see \cite{MV} or the end of Section \ref{sec:dimension} below), and so the loss in $N$ cannot be improved if $p$ is decreased (up to the factor of $N^{\epsilon}$). 

  In \cite{MV} Moyua and Vega proved that \begin{equation} \label{eq:L4MV} \bigg\|\sup_{0 < t < 1} \big|\sum_{n=1}^{N} a_n e^{2\pi i (n(\cdot) + n^2 t)}\big| \bigg\|_{L^{p}([0,1])} \leq C_{\epsilon} N^{1/3 + \epsilon} \big(\sum_{n=1}^{N} |a_n|^2 \big)^{1/2}, \ \ \ \ 1 \leq p \leq 6, \end{equation} and when $p = 6$ this is sharp up to the loss of $N^{\epsilon}$ (let $a_n = 1$ and restrict to $[0, 10^{-6}N^{-1}]$ as above). Theorem \ref{prop:WeylMaximal} improves \eqref{eq:L4MV} in the range $1 \leq p \leq 4$ in the special case where $a_n = 1$ for each $n$. It is conjectured that we have the stronger estimate \begin{equation} \label{eq:conjecture} \bigg\|\sup_{0 < t < 1} \big|\sum_{n=1}^{N} a_n e^{2\pi i (n(\cdot) + n^2 t)}\big| \bigg\|_{L^{p}([0,1])} \leq C_{\epsilon} N^{1/4 + \epsilon} \big( \sum_{n=1}^{N} |a_n|^2 \big)^{1/2}, \ \ \ \ 1 \leq p \leq 4.
 \end{equation} This conjecture is still open, though Theorem \ref{prop:WeylMaximal} implies that it holds in the Weyl sum case. Maximal estimates of the type \eqref{eq:mainMax} for more general Weyl sums were recently considered in \cite{CSmax} by Chen and Shparlinski, who studied maximal analogues of the Vinogradov mean-value estimates proved by Bourgain, Demeter, and Guth in \cite{BDG} using $\ell^2$ decoupling, and by Wooley in \cite{Wo1} using efficient congruencing.  
 
 Maximal estimates of the form \eqref{eq:L4MV}, \eqref{eq:conjecture} imply pointwise convergence results for the linear Schr\"{o}dinger equation on the torus $\T,$ which we identify with $[0,1]$. In particular, by using Littlewood-Paley theory and the Stein maximum principle we see that \eqref{eq:conjecture} with $p = 2$ is equivalent to $$\lim_{t\rightarrow 0} e^{it\Delta_{\T}}f(x) = f(x) \text{ a.e. }, \ \ \ \  \text{ for all } f\in H^{1/4 +}(\T)$$ 
 
 \noindent (see for example Appendix A in \cite{P}). The corresponding pointwise convergence result for the Schr\"{o}dinger operator on $\R$ is known to hold, and was originally proved by Carleson in \cite{Ca}. Indeed, one has $$\lim_{t\rightarrow 0} e^{it \Delta_{\R}}f(x) = f(x) \text{ a.e. }, \ \ \ \ f \in H^{1/4}(\R) $$ and this is sharp as shown in \cite{DK}. More generally, recent developments in Fourier restriction theory have led to the near-complete resolution of the pointwise convergence problem for the linear Schr\"{o}dinger equation on $\R^n$, originally posed in \cite{Ca} and \cite {DK}. The problem is to determine the minimal $s > 0$ for which \begin{equation} \label{eq:maximalest} \|\sup_{0 < t < 1} |e^{it\Delta_{\R^n}} f| \|_{L^{p} (B^n (0,1))} \leq C \|f\|_{H^s (\R^n)}\end{equation} 
 \noindent for some $p \geq 1$. In the breakthrough works of Du, Guth, and Li \cite{DGL} and Du and Zhang \cite{DZ} the estimate \eqref{eq:maximalest} was proved with sharp Sobolev index (up to the endpoint) in all dimensions $n \geq 2$ (the case $n = 2$ was proved with $p = 3$ in \cite{DGL}, and the case $n \geq 3$ was proved with $p = 2$ in \cite{DZ}). Their arguments involve several recent advancements in Fourier restriction theory, including the polynomial method, refined Strichartz estimates, $\ell^2$ decoupling, and a fractal refinement of the broad-narrow reduction of Bourgain and Guth \cite{BG}. 
 
 In the case $n = 1$ the argument due to Carleson is much simpler than the recent proofs given in higher dimensions. Unfortunately this argument does not readily transfer to the case of the torus, since the kernel of the periodic Schr\"{o}dinger operator is more erratically behaved than the kernel of the Euclidean operator. Indeed, in the periodic case the kernel is given by the quadratic Weyl sum $$w_N(x,t) = \sum_{n=1}^{N} e^{2\pi i (nx + n^2 t)}$$ which is known to be large whenever $(x,t)$ is close to a rational point $(b/q, a/q)$ with $1 \leq q < N^{1/2}$. Our main theorem confirms that \eqref{eq:conjecture} holds for this kernel, though the methods do not appear to be enough to prove the conjecture for general solutions.   
 
 The proof of Theorem \ref{prop:WeylMaximal} is inspired by the argument of Du and Zhang in \cite{DZ}. Three important features of their work play a key role in our argument: induction on the scale $N$, a version of parabolic rescaling, and the `one-dimensional' structure of the set where the supremum is attained. However, there are new complications of a number-theoretic nature that arise. 
 
 In order to prove an estimate like \eqref{eq:mainMax} one hopes to show that the supremum in $t$ is `on average' no worse than $N^{3/4 + \epsilon}$. More specifically, we already lose $N^{3/4}$ from $x \in [0, cN^{-1}]$ and so the goal is to show that remaining points do not contribute much more to the $L^4$ norm. From classical Weyl estimates we know that the supremum is larger than $N^{3/4}$ whenever $|x-a/q| < 1/(100N)$ for rationals $a/q$ with $1 \leq q < N^{1/2}$ and $q$ odd (some conditions on $a$ are required if $q$ is even; see for example \cite{O}). These intervals $I_{a/q}$, which correspond to `major arcs' in the Hardy-Littlewood circle method, are pairwise-disjoint for $q$ in this range and can be shown to cover a subset  $E\subset \T$ of measure $|E|\gtrsim 1$ (see for example \cite{MV} or the argument at the end of Section \ref{sec:dimension} below). One must then show that the Weyl sum cannot frequently be larger than $N^{3/4}$ on $E$, and also that the supremum cannot frequently be larger than $N^{3/4}$ if $x$ is outside of $E$. The latter is more challenging, since classical Weyl estimates do not give good information about lower bounds for the sum if $x$ is outside the union of the $I_{a/q}$. Moreover, the times $t$ at which the supremum is attained will be `major arc' points, and so it is not clear if it is possible to exploit the cancellation one typically obtains from the `minor arcs' in the circle method.

 To get around these issues we consider a certain collection of axis-parallel rectangles $Q = I\times J$ in $\T^2$ where the Weyl sum attains large values,  following the set-up of the argument in \cite{DZ}. The rectangles are then grouped according to the Diophantine approximation properties of $t \in J$. If the times $t\in J$ are sufficiently close to a rational $a/q$ with $q \geq N^{c\epsilon^2}$ we show that we can control their contribution by induction on the scale $N$. On the other hand, if $t \in J$ are close to a rational $a/q$ with small denominator $q$ we will see we can directly apply a local-in-time maximal estimate of Moyua and Vega (see Theorem \ref{thm:MV} below). 
 
 There are some similarities between this approach and the broad-narrow reduction of Bourgain and Guth \cite{BG} which is now a common tool in Fourier restriction theory and in particular played a key role in \cite{DGL} and \cite{DZ}. However, these connections appear to be only heuristic, and we have not found a more precise way to bring into play tools from restriction theory. It would be of interest, for example, to see if it is possible to approach the conjectured bound \eqref{eq:conjecture} using some kind of refined $\ell^2$ decoupling estimates (for example similar to those proved in \cite{GIOW}). 
 
 \subsection{An application of Theorem \ref{prop:WeylMaximal}: Large values of Weyl sums} \label{sec:application} In Section \ref{sec:cor} we will derive a few direct corollaries of our maximal estimate. For example, let $\mathcal{S}_\alpha (N)$ denote the set of $x \in \T$ such that $$\sup_{0< t < 1}\bigg| \sum_{n=1}^{N} e^{2\pi i (xn + tn^2)} \bigg| \geq N^{\alpha}.$$ We will show in Corollary \ref{cor:size} that if $3/4 < \alpha \leq 1$ then \begin{equation} \label{eq:setBound} |\mathcal{S}_{\alpha} (N)|  \lesssim_{\epsilon} N^{3 - 4\alpha + \epsilon}.\end{equation} This result gives us information about pointwise bounds for the Weyl sum assuming only information about $x$, and should be compared with the classical estimate in Proposition \ref{prop:WeylLower} below. The estimate \eqref{eq:setBound} should also be compared with recent work of Chen and Shparlinski, who obtained similar bounds for exponential sums in \cite{CS}. In the Weyl sum case their estimates in \cite{CS}, Lemma 3.2 and Corollary 3.8, show that $$|S_{\alpha}(N)| \lesssim_{\epsilon}N ^{6 - 7\alpha + \epsilon},$$ which our result improves. However, the estimates in \cite{CS} extend to the case of weighted sums and higher order phases, which ours does not. 
 
 In Section \ref{sec:dimension} we consider an analogue of a related problem raised by Chen and Shparlinski. In \cite{CS2} and \cite{CS3} the authors analyze the following set which characterizes points contributing to large values of $w_N$ as $N\rightarrow \infty$:  $$\mathcal{E}_{\alpha} = \{(x,t) \in \T^2 : |w_{N}(x,t)| \geq N^{\alpha} \text{ for infinitely many } N \in \mathbb{N}\}.$$ It is not too hard to show that $|\mathcal{E}_{\alpha}| = 0$ if $\alpha > 1/2,$ since on `most' of $\T^2$ we have square-root cancellation with $|w_N| \lesssim N^{1/2}$ (see for example \cite{CS2} for a proof). On the other hand, understanding the Hausdorff dimension of $\mathcal{E}_{\alpha}$ is a much more subtle problem.  
 
 Given a Borel set $\mathcal{S}$ we will write $\dim (\mathcal{S})$ to denote its Hausdorff dimension, so that $$\dim (\mathcal{S}) = \inf \{ s > 0 : \mathcal{H}^s (\mathcal{S}) = 0 \}, \ \ \ \ \ \mathcal{H}^s = s\text{-dimensional Hausdorff measure}. $$ In \cite{CS2} and \cite{CS3} Chen and Shparlinski proved that \begin{equation}\label{eq:CSbound} \min(3/2, 3(1-\alpha) ) \leq  \text{dim}(\mathcal{E}_{\alpha}) \leq \frac{8(1-\alpha)}{2-\alpha}.\end{equation} It was also recently shown by Chen, Kerr, Maynard, and Shparlinski in \cite{CKMS} that there exist constants $a,b > 0$ such that the set $$\{ (x,t)\in \T^2 : aN^{1/2} \leq |w_{N}(x,t)| \leq b N^{1/2} \text{ for infinitely many } N \in \mathbb{N} \} $$ has full Lebesgue measure (and in fact this holds for more general Weyl sums associated to the moment curve). 
 
 We will consider a one-dimensional version of these types of estimates. Define $$\mathcal{O}_{\alpha} = \{x \in \T : \sup_{0 < t < 1} |w_{N}(x,t)| \geq N^{\alpha} \text{ for infinitely many } N\in \mathbb{N} \} $$ As a consequence of Theorem \ref{prop:WeylMaximal} we will obtain the following. 
 
 \begin{prop}\label{prop:dim} If $3/4 \leq \alpha \leq 1$ then $$ \dim(\mathcal{O}_{\alpha})  = 4(1-\alpha).$$  
 \end{prop}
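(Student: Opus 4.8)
The argument splits into the two inequalities $\dim(\mathcal{O}_\alpha)\ge 4(1-\alpha)$ and $\dim(\mathcal{O}_\alpha)\le 4(1-\alpha)$, and in both directions the plan is to identify $\mathcal{O}_\alpha$, up to harmless $N^{\epsilon}$-losses, with a classical set of well-approximable numbers. Put $\tau=\frac{1}{2(1-\alpha)}$, so that $\tau\ge 2$ exactly when $\alpha\ge 3/4$, and for $\sigma\ge 2$ set $W(\sigma)=\{x\in\T:|x-a/q|<q^{-\sigma}\text{ for infinitely many reduced }a/q\}$. The Jarn\'{\i}k--Besicovitch theorem gives $\dim W(\sigma)=2/\sigma$; moreover this is unchanged if $q^{-\sigma}$ is dilated by a fixed constant, and (by standard results on Diophantine approximation with restricted denominators) also if the $q$ are confined to a positive-density set such as the odd integers.

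For the lower bound I would use the classical major-arc lower bound, Proposition \ref{prop:WeylLower}, in the form: there is $c_0=c_0(\alpha)>0$ with $\sup_{0<t<1}|w_N(x,t)|\ge N^{\alpha}$ whenever $q$ is odd, $q\le c_0 N^{2(1-\alpha)}$, and $|x-a/q|\le c_0 N^{-1}$. Let $W'$ be the variant of $W(\tau)$ with odd denominators and with $q^{-\tau}$ replaced by $c_1 q^{-\tau}$, where $c_1=c_1(\alpha)>0$ is small enough that for every fraction $a/q$ witnessing $x\in W'$ one can choose an integer $N_q\asymp q^{\tau}$ satisfying $q\le c_0 N_q^{2(1-\alpha)}$ and $|x-a/q|\le c_0 N_q^{-1}$ simultaneously. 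For $x\in W'$ and each of the infinitely many such $a/q$ this yields $\sup_t|w_{N_q}(x,t)|\ge N_q^{\alpha}$, and since $N_q\to\infty$ we get $x\in\mathcal{O}_\alpha$. Hence $W'\subseteq\mathcal{O}_\alpha$ and $\dim(\mathcal{O}_\alpha)\ge\dim W'=2/\tau=4(1-\alpha)$. The only mildly technical points are the congruence bookkeeping in Proposition \ref{prop:WeylLower} (sidestepped by taking $q$ odd) and the standard fact that the odd-denominator restriction preserves the dimension.

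For the upper bound I would write $\mathcal{O}_\alpha=\limsup_N\mathcal{S}_\alpha(N)$ and combine two inputs. The first is a quantitative form of the classical Weyl/circle-method fact that $\sup_t|w_N(x,t)|\ge N^{\alpha}$ confines $x$ to within $\lesssim_{\epsilon}N^{\epsilon-1}$ of some rational $a/q$ with $q\lesssim_{\epsilon}N^{2(1-\alpha)+\epsilon}$ — obtained from Weyl's inequality applied to the quadratic frequency $t$ to control $q$, together with the decomposition of $w_N$ into residue classes modulo $q$ to transfer information to the linear frequency $x$. The second is Corollary \ref{cor:size}, $|\mathcal{S}_\alpha(N)|\lesssim_{\epsilon}N^{3-4\alpha+\epsilon}$, which is exactly the total length those neighborhoods can carry and thus keeps the resulting coverings efficient. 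The decisive step is then to re-index the $\limsup$ by denominator rather than by scale: if $x\in\mathcal{O}_\alpha\setminus\Q$, there are infinitely many $N$ for which $x$ lies in such a neighborhood, these are centered at infinitely many distinct rationals $a/q$, and $q\lesssim_{\epsilon}N^{2(1-\alpha)+\epsilon}$ forces $N\gtrsim_{\epsilon}q^{1/(2(1-\alpha)+\epsilon)}$, whence $|x-a/q|\lesssim_{\epsilon}q^{-\tau+\delta(\epsilon)}$ with $\delta(\epsilon)\to 0$. Thus $\mathcal{O}_\alpha\subseteq\Q\cup W(\tau-\delta(\epsilon))$ for every $\epsilon>0$, and Jarn\'{\i}k--Besicovitch gives $\dim(\mathcal{O}_\alpha)\le 2/(\tau-\delta(\epsilon))\to 4(1-\alpha)$. (When $\alpha=3/4$ the bound $\dim(\mathcal{O}_\alpha)\le 1$ is automatic, so one may assume $\alpha>3/4$, hence $\tau>2$.)

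The main obstacle is the interaction in this last step. There is no clean pointwise converse to Proposition \ref{prop:WeylLower} — the supremum over $t$ can be moderately large even when $x$ is far from the expected major arcs — so the arithmetic confinement of $\mathcal{S}_\alpha(N)$ near low-denominator rationals must be married to the \emph{sharp} measure bound of Corollary \ref{cor:size}, and hence to Theorem \ref{prop:WeylMaximal} itself rather than to a soft $L^p$ estimate, in a way that survives the re-indexing. Concretely, one must prevent each rational $a/q$ from being over-counted across the many scales $N\gtrsim q^{\tau}$ at which a neighborhood of $a/q$ already sits inside $\mathcal{S}_\alpha(N)$; ensuring that the re-indexed approximation exponent comes out to exactly $\tau$ — rather than something smaller, which would only give a weaker upper bound on the dimension — is precisely where the sharpness of Theorem \ref{prop:WeylMaximal} is needed.
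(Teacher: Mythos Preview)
Your lower bound is essentially the paper's argument: both use Proposition~\ref{prop:WeylLower} with odd denominators and the Jarn\'{\i}k theorem to embed a well-approximable set of dimension $4(1-\alpha)$ into $\mathcal{O}_{\alpha}$ (up to an $\epsilon$-shift). That part is fine.

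The upper bound, however, has a real gap. Your ``first input'' asserts that $\sup_{t}|w_N(x,t)|\ge N^{\alpha}$ forces $x$ to lie within $\lesssim_{\epsilon}N^{\epsilon-1}$ of some rational $a/q$ with $q\lesssim_{\epsilon}N^{2(1-\alpha)+\epsilon}$, and you attribute this to Weyl's inequality in $t$ plus the residue-class decomposition. But the residue-class decomposition only produces the factorization $w_N\approx g_q(x,t)\cdot w(qx,q^2t)$, where $g_q$ is a scale-$q$ Gauss-type sum and $w$ is a scale-$N/q$ Weyl sum. The known bounds on these factors constrain the \emph{time} variable (via Lemma~\ref{lem:tApprox}), not $x$: the Weyl bound $|g_q|\lesssim (q\log q)^{1/2}$ uses only that $t$ is near $a/q$, and a lower bound on $|w(qx,q^2t)|$ of size $(N/q)^{\alpha'}$ with $\alpha'\ge\alpha$ does not by itself place $qx$ near an integer unless $\alpha'$ is essentially $1$, which happens only when $q$ is at the top of its range. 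In general no single step transfers the largeness to a major-arc condition on $x$; you would need a nontrivial iteration, and even then it is unclear the exponents close. You yourself flag this as ``the main obstacle,'' but pairing the missing structural statement with the measure bound of Corollary~\ref{cor:size} does not supply it: Corollary~\ref{cor:size} controls total length, not the arithmetic location of $\mathcal{S}_\alpha(N)$, and without the confinement your re-indexing by denominator cannot be carried out (there is no reason the infinitely many scales $N$ should produce infinitely many \emph{distinct} rationals with the claimed approximation quality).

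The paper avoids this structural question entirely. For the upper bound it argues by contradiction via Frostman's lemma: if $\mathcal{H}^{\gamma}(\mathcal{O}_\alpha)>0$ for some $\gamma>4(1-\alpha)$, take a nonzero $\gamma$-dimensional measure $\mu_\gamma$ supported on $\mathcal{O}_\alpha$, pass to dyadic scales using the completion lemma $|w_N|\lesssim S_M$ of Chen--Shparlinski, and then bound $\mu_\gamma(\T)$ by applying a \emph{fractal} version of Theorem~\ref{prop:WeylMaximal} (Lemma~\ref{lem:fractalMax}, due to Eceizabarrena--Luc\`{a}), which upgrades the $L^4(\T)$ maximal estimate to $L^4(d\mu_\gamma)$ at the cost of an $N^{(1-\gamma)/4}$ factor. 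Summing over dyadic scales and letting the base scale $M\to\infty$ forces $\mu_\gamma(\T)=0$, a contradiction. This route uses only Theorem~\ref{prop:WeylMaximal} as a black box and never needs to know \emph{where} inside $[0,1]$ the set $\mathcal{S}_\alpha(N)$ sits.
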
  \noindent We prove this result in Section \ref{sec:dimension}. This proposition can be seen as a `Weyl sum analogue' of the divergence set estimates for the Euclidean Schr\"{o}dinger equation proved in \cite{BBCR}, \cite{DZ}, \cite{LR}. Indeed the method of proof is similar. 

We remark that the lower bound in the proof of Proposition \ref{prop:dim} does not require our maximal estimate. Instead it follows from the classical observation that $\mathcal{O}_{\alpha}$ contains points which are well-approximated by rationals with odd denominators. A theorem of Jarn\'{\i}k gives us good information about the dimension of such sets. The set $\mathcal{O}_{\alpha}$ could contain many other points, but Theorem \ref{prop:WeylMaximal} will be used to show that these additional points do not increase the Hausdorff dimension. 

\subsection{Connection with recent estimates of Weyl sums}\label{sec:otherWeyl}  We briefly discuss some other recent estimates of Weyl sums that are related to Theorem \ref{prop:WeylMaximal} and the corollaries mentioned above. There is a wide body of research on Weyl sums, and we do not claim to give anything close to a complete summary here. 

Motivated by problems related to the Talbot effect from optics, the authors in \cite{BPPSV} have recently proved the following essentially sharp bound: for any $\epsilon >0$ and $\gamma \in [0,1)$ one has \begin{equation} \label{eq:talbot} \sup_{x \in [0,1]}\big| \sum_{n=1}^{N} e^{2\pi i (xn + (x+\gamma)n^2)}  \big| \lesssim_{\epsilon, \gamma} N^{3/4 + \epsilon}.\end{equation}  Erdoğan and Shakan had previously shown in \cite{ES} that \begin{equation} \label{eq:talbot2} \sup_{x \in [0,1]}\big| \sum_{n=1}^{N} e^{2\pi i (xn + (a-rx)n^2)}  \big| \lesssim_{\epsilon, a,r} N^{4/5 + \epsilon}. \end{equation} 
\noindent In \cite{CStalb} Chen and Shparlinski also prove analogues of these types of estimates involving supremums over preimages of Lipschitz functions. 

If more information is known about the coefficients one can obtain improvements. For example, Oskolkov showed that if $r = m/2\pi$ and $a = k/2$  for $m,k$ odd integers then the bound in \eqref{eq:talbot2} can be improved to $N^{1/2}$ (see \cite{Os}, Lemma 2). For more information about the motivation for these estimates and their relation to the Talbot effect see for example \cite{ET}, Section 2.3, or the introductions of \cite{ES} and \cite{Os}. 

Note that the dependence on $N$ in \eqref{eq:talbot} is the same as in our $L^4$ maximal estimate. There are some overlapping ideas in parts of the proof of Theorem \ref{prop:WeylMaximal} and the proof of \eqref{eq:talbot}, in particular related to the role of Diophantine approximation of the coefficient of $n^2$. However, neither of these results appears to imply the other. Indeed, to prove Theorem \ref{prop:WeylMaximal} one must understand the size of the region where the supremum in $t$ is much larger than $N^{3/4}$. Also note that the constant in \eqref{eq:talbot} depends on $\gamma$, which leads to additional obstacles (for example if we shift the frequencies and try to exploit Galilean invariance). On the other hand, Theorem \ref{prop:WeylMaximal} implies that if $t(x)$ is any measurable function of $x$ (and in particular a linear function of $x$) then $$ \big(\int_{\T} \big| \sum_{n=1}^{N} e^{2\pi i (xn + t(x)n^2)} \big|^4 dx \big)^{1/4} \leq C_{\epsilon}N^{3/4 + \epsilon},$$ but this is not enough to conclude \eqref{eq:talbot}. 

The inequalities \eqref{eq:talbot}, \eqref{eq:talbot2} imply dimension bounds on the graphs of certain solutions to the linear Schr\"{o}dinger equation if they are restricted to lines, as studied in \cite{ES}, \cite{Os}, and \cite{OC}. They are also useful for proving similar estimates for the Hilbert-transform-like function $$H(x,t) = \sum_{\substack{ n\in \Z \\ n \neq 0 }}  \frac{1}{n} e^{2\pi i (xn + tn^2)},$$ which is studied in depth by  Chakhkiev and Oskolkov in \cite{OC}. Indeed, by using Littlewood-Paley theory and summation by parts one can use estimates for the Weyl sum to obtain information about $H$; estimates for $H$ then imply bounds on the dimension of graphs of solutions to the linear Schr\"{o}dinger equation with initial data of bounded variation  (see for example \cite{ES}, Theorem 2.2.). In \cite{OC} the authors consider $H$ restricted to slices with $x$ or $t$ fixed, and characterize the H\"{o}lder continuity of these slices in terms of the Diophantine approximation properties of $x$ or $t$. In particular, in \cite{OC}, Corollary 4, the authors calculate the Hausdorff dimension of the set of $x$ for which the restriction $H|_{x}$ is $\alpha$-H\"{o}lder continuous, and prove analogous bounds for $H|_{t}$. The methods of proof bear a resemblance to the proof of the lower bound in Proposition \ref{prop:dim}, and in particular the Jarn\'{\i}k theorem (Theorem \ref{thm:Jarnik} below) plays an important role in their argument.

\subsection{Notation} We will write $A \lesssim B$ to mean there is a uniform constant $C > 0$ such that $A \leq C B$. If there is a constant $C(\beta)$ that depends on some parameter $\beta$ such that $A \leq C(\beta)B$ we will write $A \lesssim_{\beta} B$. We similarly define $A \gtrsim B$ and $A\gtrsim_{\beta}B,$ and write $A \sim B$ if $A \lesssim B$ and $A \gtrsim B$. Throughout the paper we occasionally let $C, c, c',$ etc. denote uniform constants which may change from line-to-line. We also let $C_{\epsilon}$ be the distinguished constant appearing in Theorem \ref{prop:WeylMaximal}. Once we fix $\epsilon > 0$ the constant $C_{\epsilon}$ does not change line-to-line.

 \subsection*{Acknowledgments} The author thanks the anonymous referees for providing helpful comments which have improved the presentation of the paper. The author also thanks M. B. Erdoğan for suggesting several references and for helpful discussions related to the material in this paper.

\section{The proof of Theorem \ref{prop:WeylMaximal}}\label{sec:Weyl}

Certain collections of axis-parallel rectangles will play a key role in the proof of Theorem \ref{prop:WeylMaximal}. Fix $3/4 \leq \alpha \leq 1$ and a small $\eta > 0$, and tile $[0,1]^2$ by axis-parallel rectangles of dimensions approximately $N^{-2 + \alpha - \eta} \times N^{-4 + 2\alpha - 2\eta}$. Let $\Lambda_{\alpha}$ denote the rectangles in this tiling. We will work with certain sub-collections of $\Lambda_{\alpha}$. \begin{definition} Let $\pi_x: [0,1]^2 \rightarrow [0,1]$ denote projection in the $x$ variable. Let $\mathcal{Q}$ be a sub-collection of $\Lambda_{\alpha}$. We say that $\mathcal{Q}$ is \textit{one-dimensional} at scale $(N, \alpha)$ if for each $x_0 \in [0,1]$ there are at most two rectangles $Q \in \mathcal{Q}$ such that $x_0 \in \pi_{x}(Q)$  \end{definition} The choice of scale is motivated by the following `locally-constant' behavior of exponential sums.

\begin{lemma}\label{lem:const} Suppose $0 < \alpha \leq 1$ and fix a small $\eta > 0$. Suppose $c N^{\alpha} \leq |w_{N}(x_0,t_0)| \leq C N^{\alpha}$ and let $Q_{\alpha}$ be an axis-parallel rectangle of dimensions $N^{-2 + \alpha -\eta} \times N^{-3 + \alpha -\eta}$ that contains $(x_0,t_0)$. Then $N^{\alpha } \lesssim |w_{N}(x,t)| \lesssim  N^{\alpha }$ for all $(x,t) \in Q_{\alpha}$.\end{lemma}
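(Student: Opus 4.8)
The plan is to prove this by the standard \emph{locally constant} principle: the exponential sum $w_N$ has trivially bounded derivatives at scale $N$, so over a rectangle of the stated dimensions it cannot change by more than roughly $N^{-\eta}$ of $N^\alpha$, and hence must remain comparable to its value at $(x_0,t_0)$.

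First I would record pointwise bounds on the partial derivatives of $w_N(x,t)=\sum_{n=1}^{N} e^{2\pi i(nx+n^2 t)}$. Differentiating term by term,
$$\partial_x w_N(x,t) = 2\pi i \sum_{n=1}^{N} n\, e^{2\pi i(nx+n^2 t)}, \qquad \partial_t w_N(x,t) = 2\pi i \sum_{n=1}^{N} n^2\, e^{2\pi i(nx+n^2 t)},$$
so that $\sup|\partial_x w_N| \le 2\pi \sum_{n=1}^{N} n \lesssim N^2$ and $\sup|\partial_t w_N| \le 2\pi \sum_{n=1}^{N} n^2 \lesssim N^3$.

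Next, given any $(x,t)\in Q_\alpha$, since $(x_0,t_0)\in Q_\alpha$ as well we have $|x-x_0|\le N^{-2+\alpha-\eta}$ and $|t-t_0|\le N^{-3+\alpha-\eta}$. Integrating $\nabla w_N$ along the polygonal path from $(x_0,t_0)$ to $(x,t_0)$ to $(x,t)$ (equivalently, the mean value theorem) gives
$$|w_N(x,t)-w_N(x_0,t_0)| \le |x-x_0|\sup|\partial_x w_N| + |t-t_0|\sup|\partial_t w_N| \lesssim N^{-2+\alpha-\eta}N^2 + N^{-3+\alpha-\eta}N^3 \lesssim N^{\alpha-\eta}.$$
Combining this with the hypothesis $cN^\alpha \le |w_N(x_0,t_0)| \le CN^\alpha$ and the triangle inequality produces a constant $C'$ with $cN^\alpha - C'N^{\alpha-\eta} \le |w_N(x,t)| \le CN^\alpha + C'N^{\alpha-\eta}$, so $|w_N(x,t)| \sim N^\alpha$ for all $(x,t)\in Q_\alpha$ once $N^{-\eta}\le c/(2C')$, i.e.\ once $N$ is large in terms of $\eta$ and $c,C$.

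I do not expect any real obstacle: the lemma is a soft consequence of these crude derivative bounds. The only point worth a remark is that the factor $N^{\alpha-\eta}$ coming from the derivative estimate carries an absolute constant (on the order of $4\pi$), so the conclusion holds once $N$ is large in terms of $\eta,c,C$; equivalently, if one reads the side lengths as $\sim N^{-2+\alpha-\eta}\times\sim N^{-3+\alpha-\eta}$ with a small enough implied constant (depending on $c$), then the estimate holds for every $N\ge 1$. Either convention suffices for the uses of the lemma in the sequel. One could alternatively run the derivative step as a Bernstein-type inequality for $w_N$, but the direct estimate above is the cleanest route.
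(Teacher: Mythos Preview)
Your proof is correct and is precisely the approach the paper indicates: the paper's own proof is the single line ``This is a direct consequence of the mean value theorem,'' and your argument spells out exactly that computation with the trivial derivative bounds $|\partial_x w_N|\lesssim N^2$ and $|\partial_t w_N|\lesssim N^3$.
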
 
\begin{proof} This is a direct consequence of the mean value theorem (see also \cite{Wo} or \cite{CS} for more general versions of this estimate).\end{proof} \noindent Below we suppress the role of $\eta$ from the notation, since it can be chosen to be much smaller than all other small parameters under consideration (e.g. take $\eta = \epsilon^{100}$).

We now turn to the proof of Theorem \ref{prop:WeylMaximal}. To slightly simplify the notation we will write $$u(x,t) = w_N(x,t).$$ Fix $\epsilon > 0$  for the rest of the argument. If $\sup_{t}|u(x,t)| \leq N^{3/4}$ then the estimate is trivial, so it suffices to estimate the integral on the set $E$ where the supremum is larger than $N^{3/4}$. Partition $E$ into a disjoint union of sets $E_k$ where the supremum is in $[2^k, 2^{k+1}),$ with $N^{3/4} \leq 2^k \leq N.$ The number of $k$ is $O(\log N)$ and so it suffices to estimate
\begin{equation}\label{eq:maxFixedScale} \big(\int_{E_{k}} \sup_{0 < t < 1}|u(x,t)|^4 dx \big)^{1/4} 
\end{equation}
	
\noindent Our argument will involve induction on the scale $N$. In particular note that since Theorem \ref{prop:WeylMaximal} is trivial if $N \lesssim_{\epsilon} 1$ we may assume by induction that it holds for any scale $N'$ with $N' < N/2$.

Choose $\alpha \in [3/4, 1]$ such that $2^{k} \sim N^{\alpha}$, and cover $E_{k}$ by a union of intervals $I_\alpha$ of length approximately $N^{-2 +\alpha}$ which overlap only at their boundaries. For each $I_{\alpha}$ pick a point $x_{\alpha} \in I_{\alpha} \cap E_{k}$ and choose an interval $J_{\alpha}$ of length approximately $N^{-4 + 2\alpha}$ such that $\sup_{t}|u(x_{\alpha}, t)|$ is attained at some $t_{\alpha}\in J_{\alpha}$. Let $$Q_{\alpha} = I_{\alpha} \times J_{\alpha}.$$ Then the $Q_{\alpha}$ form a one-dimensional collection at scale $(N,\alpha)$. Moreover, by Lemma \ref{lem:const} there exist constants $c, C > 0 $ such that $$cN^{\alpha} \leq  |u(x,t)| \leq C N^{\alpha}, \ \ \ (x,t) \in Q_{\alpha},$$ and so \begin{equation}\label{eq:oneDim} \big(\int_{E_{k}} \sup_{0 < t < 1}|u(x,t)|^4 dx \big)^{1/4}   N^{1 -\alpha/2}\big(\sum_{Q_{\alpha}}  \int_{Q_{\alpha}} |u(x,t)|^4 dxdt \big)^{1/4}. \end{equation}

 Let $X$ denote the one-dimensional collection of rectangles $Q_{\alpha}$ appearing in $\eqref{eq:oneDim}$. 
 
 \subsection{Partitioning the rectangles in $X$} We will now partition the rectangles in $X$ into different groups $X_q$ according to the rational approximation properties of $t$ with $(x,t) \in Q_{\alpha}$. To clear up notation we now drop the subscript $\alpha$ from the $Q_{\alpha} \in X$.
 
 We will use the following dispersive estimate for $u$ due to Bourgain, which is a refinement of the classical Weyl bound.

\begin{lemma}[\cite{B}]\label{lem:Bourgain} Suppose there are integers $1 \leq q \leq N$ and  $0 \leq a < q$ with $(a,q) = 1$ such that $|t - a/q| \leq 1/qN.$ Then for all $x$ one has $$|u(x,t)|\lesssim \log N \frac{N}{q^{1/2}(1 + N|t-a/q|^{1/2} )}.$$
\end{lemma}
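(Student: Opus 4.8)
The plan is to run a Weyl differencing argument and then estimate the resulting exponential sum using the Diophantine approximation properties of $t$. Write $e(\theta):=e^{2\pi i\theta}$ and let $\|\theta\|$ denote the distance from $\theta$ to the nearest integer. Expanding $|u(x,t)|^2$ and setting $h=n-n'$ gives
$$ |u(x,t)|^2=\sum_{|h|<N}e(hx-h^2t)\sum_{n\in I_h}e(2nht),\qquad I_h:=[1,N]\cap[1+h,N+h]. $$
Bounding each inner geometric sum by $\min\!\bigl(N,\tfrac1{2\|2ht\|}\bigr)$ (with the convention that this is $N$ when $\|2ht\|=0$) yields the basic inequality $|u(x,t)|^2\lesssim\sum_{|h|<N}\min\!\bigl(N,\|2ht\|^{-1}\bigr)$. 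It therefore suffices to show that this last sum is $\lesssim N\log N+\tfrac{\log N}{q\,|t-a/q|}$, with the sharper variant $\lesssim N^2/q+N\log N$ when $|t-a/q|$ is very small, say $|t-a/q|\lesssim N^{-2}$.

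Set $\beta:=t-a/q$, so $0<|\beta|\le 1/(qN)$ (the degenerate case $\beta=0$ is handled the same way). I would split the $h\in(-N,N)$ according to the residue $r:=2ah\bmod q$; for each fixed $r$ there are $\lesssim N/q$ admissible $h$. Since $|2h\beta|<2N|\beta|\le 2/q<1$ for $|h|<N$, one has $\|2ht\|=\|r/q+2h\beta\|$. For residues $r$ bounded away from $0$ and $q$ this is $\gtrsim\min(r,q-r)/q$, so such $h$ contribute $\lesssim N/\min(r,q-r)$ per residue and $\lesssim N\log q$ in total. The main contribution comes from the $O(1)$ residues $r$ within $O(1)$ of $0$ or of $q$: for such $r$, as $h$ runs over its progression the numbers $r/q+2h\beta$ form an arithmetic progression of common difference $2q|\beta|$ contained in a single period, so they approach only one integer, and summing $\min(N,\|\cdot\|^{-1})$ over such a progression is $\lesssim N+(q|\beta|)^{-1}\log N$ (and is instead $\lesssim N^2/q$ when $|\beta|\lesssim N^{-2}$, since then $\gtrsim N/q$ of the terms land within $1/N$ of that integer). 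Collecting everything gives the asserted bound on $\sum_{|h|<N}\min(N,\|2ht\|^{-1})$.

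It remains to check that this reproduces the stated inequality. Put $T:=\tfrac{N}{q^{1/2}(1+N|t-a/q|^{1/2})}$. Since $N|\beta|^{1/2}\le(N/q)^{1/2}$ we have $1+N|\beta|^{1/2}\le 2(N/q)^{1/2}$, hence $T^2\ge N/4$, so $N\log N\le 4(\log N)T^2$. If moreover $|\beta|>N^{-2}$ then $1+N|\beta|^{1/2}\le 2N|\beta|^{1/2}$, so $T^2\ge 1/(4q|\beta|)$ and $\tfrac{\log N}{q|\beta|}\le 4(\log N)T^2$; combining, $|u(x,t)|^2\lesssim(\log N)T^2$. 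If instead $|\beta|\le N^{-2}$ then $1+N|\beta|^{1/2}\sim 1$, and the variant bound together with $q\le N$ gives $|u(x,t)|^2\lesssim N^2/q+N\log N\lesssim(\log N)N^2/q\sim(\log N)T^2$. Either way $|u(x,t)|\lesssim(\log N)^{1/2}T$, which is certainly $\lesssim(\log N)\,\tfrac{N}{q^{1/2}(1+N|t-a/q|^{1/2})}$, as claimed.

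The only step that needs care is the estimate over the $O(1)$ near-zero residues: each of the $\sim N/q$ corresponding $h$ can contribute as much as $N$, so bounding their total by $N\cdot\#\{h\}$ is too wasteful once $|\beta|>N^{-2}$; one must instead sum $\min(N,\|\cdot\|^{-1})$ along the arithmetic progression $\{r/q+2h\beta\}$, the gain $(q|\beta|)^{-1}$ being the reciprocal of the step and the $\log N$ coming from the range of dyadic scales of $\|\cdot\|$. The hypothesis $|t-a/q|\le 1/(qN)$ enters precisely here: it keeps that progression inside a single period of the circle as $h$ ranges over $(-N,N)$, so that it approaches only one integer and the elementary one-period estimate applies.
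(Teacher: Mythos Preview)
Your argument is correct (modulo harmless imprecisions: when $q$ is even the map $h\mapsto 2ah\bmod q$ is two-to-one rather than a bijection, and for very small $q$ the progression of $r/q+2h\beta$ may meet $O(1)$ integers rather than exactly one; neither affects the final bound). It is, however, a genuinely different proof from the paper's.

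The paper does not carry out Weyl differencing at all. Instead it writes $u(\cdot,t)=W(\cdot,t)\ast D_N$, where $W(x,t)=\sum_{|n|\lesssim N}\phi(n/N)e^{2\pi i(nx+n^2t)}$ is the smoothed sum and $D_N$ is the Dirichlet kernel, and bounds $|u(x,t)|\le \|D_N\|_{L^1}\|W(\cdot,t)\|_{L^\infty}\lesssim(\log N)\|W(\cdot,t)\|_{L^\infty}$. The required estimate for $W$ (without any logarithm) is then quoted from Bourgain's Lemma~3.18, whose proof goes through Poisson summation and Gauss-sum asymptotics rather than differencing. Thus in the paper the factor $\log N$ enters via the $L^1$ norm of the Dirichlet kernel, whereas in your argument it arises from the dyadic summation over scales of $\|2ht\|$ near the small residues. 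Your route is self-contained and in fact delivers the slightly sharper factor $(\log N)^{1/2}$; the paper's route is shorter only because it black-boxes the substantive estimate to \cite{B}.
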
 

\begin{proof} This is essentially proved in Lemma 3.18 in \cite{B}, although the details are only provided for sums of the form $$ W(x,t) = \sum_{|n| \lesssim N} \phi(n/N) e^{2\pi i (nx + n^2 t)},$$ where $\phi$ is a smooth cut-off equal to 1 on $[1,N]$ (for example). But we can write $u = W(\cdot, t) \ast D_{N}(x),$ where $D_{N}$ is the Dirichlet kernel. Therefore $$|u(x,t)| \leq \|W(\cdot, t)\|_{L^{\infty}}\|D_N\|_{L^1} \lesssim \log(N) \|W(\cdot, t)\|_{L^{\infty}},$$ using the well-known bound $\|D_N\|_{L^1} \lesssim \log(N)$.  The claim now follows by applying Lemma 3.18 from \cite{B}. 
\end{proof}

We choose $m \in \mathbb{N}$ such that $2^{-m/2}N \sim N^{\alpha}$, so $2^{m}$ is the dyadic scale of $N^{2-2\alpha}$. This choice of scale is motivated by Lemma \ref{lem:Bourgain}. Note that $2^{m} \lesssim N^{1/2}$ since $\alpha \geq 3/4$. 

Let $Q_j = I_j \times J_j$ index $Q \in X$. Then by Dirichlet's approximation theorem, for each $t \in J_j$ there exist integers $0 \leq a < q \leq N$  with $|t - a/q| < 1/Nq$ and $(a,q) = 1$. Lemma \ref{lem:Bourgain} then implies that there must exist some $t_j \in J_j$ and some integers $0\leq a_j <  q_j \leq N$ with $(a_j, q_j) = 1 $ such that $$ 2^{-m/2}N \lesssim \|u\|_{L^{\infty}(Q_j)} \lesssim  \frac{N\log(N)}{q_j^{1/2}(1 + N|t_j -a_j/q_j|^{1/2} )}.$$ In particular \begin{equation} \label{eq:qBound} 1 \leq q_j \leq c\log(N)^2 2^{m} \end{equation} and \begin{equation}\label{eq:tRange} |t_j - a_j/q_j| \leq c\log(N)^2 \frac{2^m}{q_{j} N^{2}}.
\end{equation} Note that \eqref{eq:tRange} guarantees $t_j$ is closer to the rational $a_j/q_j$ than what Dirichlet's theorem implies. We collect this result as the following lemma.

\begin{lemma}\label{lem:tApprox} Suppose $Q$ is an axis-parallel rectangle of dimension $N^{-2 + \alpha}\times N^{-4 + 2\alpha}$ with $\|u\|_{L^\infty(Q)} \geq N^{\alpha}$, where $u$ is a scale-$N$ quadratic Weyl sum. Suppose $t_\ast$ is the time at which the supremum is attained, and $|t_{\ast} - a/q| \leq \frac{1}{qN}$ with $0 \leq a < q\leq N$ and $(a,q) =1$. Then if $2^{m} \sim N^{2- 2\alpha}$ we have \begin{equation} \label{eq:qBoundGeneral} 1 \leq q \leq c\log(N)^2 2^{m} \end{equation} and in fact \begin{equation}\label{eq:tRangeGeneral} |t_\ast - a/q| \leq c\log(N)^2 \frac{2^m}{q N^{2}}.\end{equation}   
\end{lemma}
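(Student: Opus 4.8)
The plan is to apply Bourgain's dispersive bound (Lemma \ref{lem:Bourgain}) at a point where $|u|$ attains its maximum over $Q$, and then simply rearrange the resulting inequality, using that $2^m$ is the dyadic size of $N^{2-2\alpha}$. First, since $u$ is continuous and $Q$ is compact, pick $(x_\ast,t_\ast) \in Q$ with $|u(x_\ast,t_\ast)| = \|u\|_{L^\infty(Q)} \geq N^\alpha$; by Dirichlet's approximation theorem (which is what makes the stated hypothesis on $t_\ast$ non-vacuous) there are integers $0 \leq a < q \leq N$ with $(a,q) = 1$ and $|t_\ast - a/q| \leq 1/(qN)$, and these are exactly the hypotheses under which Lemma \ref{lem:Bourgain} applies at the point $(x_\ast,t_\ast)$.

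Invoking that lemma gives
\[ N^\alpha \leq |u(x_\ast,t_\ast)| \lesssim \log N \,\frac{N}{q^{1/2}\big(1 + N|t_\ast - a/q|^{1/2}\big)}, \]
hence
\[ q^{1/2}\big(1 + N|t_\ast - a/q|^{1/2}\big) \lesssim \log(N)\, N^{1-\alpha} \sim \log(N)\, 2^{m/2}, \]
the last step because $N^{1-\alpha} = (N^{2-2\alpha})^{1/2} \sim 2^{m/2}$. From this single inequality both conclusions follow by discarding one of the two terms in the left-hand factor: keeping only the constant term yields $q^{1/2} \lesssim \log(N)\, 2^{m/2}$, i.e. $q \lesssim \log(N)^2\, 2^m$, which is \eqref{eq:qBoundGeneral}; keeping instead only the term $q^{1/2} N |t_\ast - a/q|^{1/2}$ yields $|t_\ast - a/q|^{1/2} \lesssim \log(N)\, 2^{m/2} q^{-1/2} N^{-1}$, i.e. $|t_\ast - a/q| \lesssim \log(N)^2\, 2^m/(q N^2)$, which is \eqref{eq:tRangeGeneral}. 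One then takes $c$ to be the larger of the two implicit constants.

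There is essentially no obstacle here: the statement is an abstract repackaging of the computation already carried out for the rectangles $Q_j \in X$ above, now decoupled from that particular collection, and the dimensions of $Q$ play no role in the proof (they are recorded only because these are the rectangles that arise in the main argument). The only points demanding any care are verifying that the hypotheses of Lemma \ref{lem:Bourgain} are met exactly as stated and keeping correct track of the powers of $\log N$ and of the identification $N^{1-\alpha} \sim 2^{m/2}$.
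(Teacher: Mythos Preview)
Your proof is correct and follows essentially the same approach as the paper: apply Bourgain's dispersive bound (Lemma~\ref{lem:Bourgain}) at the point where the supremum is attained, then rearrange the resulting inequality using $N^{1-\alpha}\sim 2^{m/2}$ to extract the bounds on $q$ and on $|t_\ast-a/q|$. Indeed, the paper presents this lemma simply as a repackaging of the preceding computation for the $Q_j\in X$, exactly as you observe.
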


We now group $X$ according to the value of $q_j$ from the range \eqref{eq:qBound}. We say $q_j \sim Q_j$ if $$|t_j - a_j/q_j| \leq c\log(N)^2 \frac{2^m}{q_{j}N^2}$$ for some $0 \leq a_j < q_j$ with $(a_j, q_j) = 1$ and $$ 1 \leq q_j \leq c\log(N)^2 2^m ,$$ where $c$ is the constant appearing above. Note that $a_j/q_j$ is uniquely determined. Indeed, if $a'/q' \neq a_j/q_j$ and both $q_j \sim Q_j$ and $q' \sim Q_j$ then we must have $$\frac{1}{q' q_j} \leq \big|\frac{a'}{q'} - \frac{a_j}{q_j}\big| \leq 2c\log(N)^2 \frac{2^m}{N^2} \big( \frac{1}{q'} + \frac{1}{q_j} \big) $$ and therefore  $N^2 \lesssim \log(N)^2 2^{2m}$; but this is false since $2^{2m} \lesssim N$. A similar argument shows that if $q' = q_j$ then $a' = a_j$ whenever $q_j \sim Q_j$.  

We define $$X_q = \{ Q_j \in X :  q \sim Q_j \}.$$ Then $$X =  \bigcup_{q=1}^{c\log(N)^2 2^{m} } X_q $$ and the union is disjoint.

\subsection{Contribution of $X_q$ with $1 \leq q \leq N^{\delta}$} \label{sec:smallq} We begin by considering the contribution to \eqref{eq:oneDim} from rectangles in $X_q$ with $1 \leq q \leq N^{\delta}$, where $\delta> 0$ is a small parameter with $\delta < \epsilon^2$. In this case we directly estimate the integral in \eqref{eq:oneDim} without using induction. Note that for each $Q_j$ under consideration we have $$|t_j - a/q| \leq  c\log(N)^2 \frac{2^m}{q N^2}$$ for some choice of $1 \leq q \leq N^\delta$ and $0 \leq a < q$ with $(a,q) = 1$. Let $\Phi(q)$ denote the number of positive integers $a$ less than $q$ such that $(a,q)= 1$ (so $\Phi$ is the Euler totient function). Then the number of possible $a/q$ in this case is \footnote{ In fact we have the more precise estimate $\sum_{q = 1}^{N^{\delta}} \Phi(q) = \frac{3}{\pi^2} N^{2\delta} + O(\delta N^{\delta} \log(N))$ but this will be of no use here. See \cite{A}, Theorem 3.7, for a proof of this identity. } \begin{equation}\label{eq:Euler} \sum_{q=1}^{N^{\delta}} \Phi(q) \lesssim N^{2\delta}.\end{equation} Therefore after a loss of $CN^{2\delta}$ we may assume that there is one fixed $a/q$ as above such that all remaining $Q_j$ satisfy $|t_j - a/q| \leq  c\log(N)^2 \frac{2^m}{qN^2}$. Let $Y$ denote the collection of the remaining $Q$. Then by \eqref{eq:qBound} we have $$\bigcup_{Q \in Y} Q \subset [0,1] \times [ a/q - c\log(N)^2 \frac{2^m}{q N^2}, \ a/q + c\log(N)^2 \frac{2^m}{q N^2} ] := \T \times I,$$ with $|I| \lesssim_{\delta} 2^{m}N^{-2 + \delta} \sim N^{-2\alpha + \delta}.$ Since $\delta < \epsilon^2$ all losses of $N^{O(\delta)}$ will be negligible. We will suppress the dependence below and write $A \lessapprox B$ to mean $A \leq C(\delta)N^{O(\delta)}B$.

Notice that since the collection $Y$ is one-dimensional and contained in $\T\times I$ we have \begin{equation} \label{eq:smallqUpper}N^{1-\alpha/2} \big(\sum_{Q \in Y} \int_{Q}|u|^4 \big)^{1/4}  \lesssim \big( \int_{\T} \sup_{t \in I}|u(x,t)|^4 dx \big)^{1/4}.\end{equation}
\noindent To conclude the argument we will appeal to the following local estimate of Moyua and Vega. 
\begin{thm}[\cite{MV}] \label{thm:MV} Let $u$ be any solution to the linear Schr\"{o}dinger equation on $\T$ (not necessarily a Weyl sum). Suppose $u(x,0) = f(x)$ and $\widehat{f}$ is supported in $[-N,N]$. If $\eta > 0 $ then for any $\epsilon > 0$ we have $$\| \sup_{0 < t < \eta} |u(x,t)|\|_{L^4 (\T)} \lesssim_{\epsilon} N^{1/2 + \epsilon} \max(N^{-1}, \eta)^{1/4} \|f\|_{L^{2} (\T)}.$$  
\end{thm}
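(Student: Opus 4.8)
The plan is to reduce a general $\eta$ to the single scale $\eta\sim N^{-1}$, and then to handle that endpoint by a $TT^\ast$ argument whose crux is an arithmetic estimate for the resulting kernel.

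\textbf{Step 1: reduction to $\eta\sim N^{-1}$.} If $\eta<N^{-1}$ then $\sup_{0<t<\eta}|u|\le\sup_{0<t<N^{-1}}|u|$ and $\max(N^{-1},\eta)=N^{-1}$, so this range follows from the case $\eta=N^{-1}$. If $\eta\ge N^{-1}$, cover $(0,\eta)$ by $O(\eta N)$ intervals $I_k$ of length $N^{-1}$. Then $\sup_{0<t<\eta}|u|=\max_k\sup_{t\in I_k}|u|$, so, using $\max_k c_k^4\le\sum_k c_k^4$ and the fact that translating the time variable only multiplies each $a_n$ by a unimodular constant (hence preserves $\|f\|_{L^2}$),
\[
\int_\T\sup_{0<t<\eta}|u|^4\,dx\le\sum_k\int_\T\sup_{t\in I_k}|u|^4\,dx\lesssim\eta N\,\sup_{t_0\in[0,1]}\int_\T\sup_{0<s<N^{-1}}|u(x,t_0+s)|^4\,dx .
\]
Applying the case $\eta=N^{-1}$ to each translated Weyl sum gives $\|\sup_{0<t<\eta}|u|\|_{L^4}\lesssim(\eta N)^{1/4}\,N^{1/4+\epsilon}\|f\|_2=N^{1/2+\epsilon}\eta^{1/4}\|f\|_2$. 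It therefore remains to prove
\[
\Big\|\sup_{0<t<N^{-1}}|u(x,t)|\Big\|_{L^4_x(\T)}\ \lesssim_\epsilon\ N^{1/4+\epsilon}\|f\|_{L^2(\T)} .
\]

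\textbf{Step 2: linearization and $TT^\ast$.} Linearizing the supremum, fix a measurable $t:\T\to(0,N^{-1})$ and let $S:\ell^2(\{|n|\le N\})\to L^4(\T)$ be the operator $(Sa)(x)=\sum_{|n|\le N}a_ne^{2\pi i(nx+n^2t(x))}$; it suffices to bound $\|S\|_{\ell^2\to L^4}$ uniformly in $t(\cdot)$ by $N^{1/4+\epsilon}$. Since $\|S\|_{\ell^2\to L^4}^2=\|SS^\ast\|_{L^{4/3}(\T)\to L^4(\T)}$ and $SS^\ast$ acts by integration against
\[
K(x,y)=\sum_{|n|\le N}e^{2\pi i\,(\,n(x-y)\,+\,n^2(t(x)-t(y))\,)},
\]
this reduces to $\|SS^\ast\|_{L^{4/3}\to L^4}\lesssim_\epsilon N^{1/2+\epsilon}$. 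The kernel $K$ is itself a quadratic Weyl sum in the variables $(x-y,\ t(x)-t(y))$ with $|t(x)-t(y)|\le N^{-1}$; hence (splitting off the $n\leftrightarrow-n$ symmetry and applying Lemma \ref{lem:Bourgain} with $q=1$) one gets $|K(x,y)|\lesssim\log N\cdot\min\!\big(N,\ |t(x)-t(y)|^{-1/2}\big)$, while completing the square and applying the first-derivative (van der Corput) bound yields the extra decay $|K(x,y)|\lesssim\log N\cdot\|x-y\|_\T^{-1}$ in the range $\|x-y\|_\T\gtrsim N|t(x)-t(y)|$, in which the stationary point of the completed square lies outside $[-N,N]$.

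\textbf{Step 3: the hard regime.} If $|t(x)-t(y)|\lesssim N^{-2}$ then $|K(x,y)|\lesssim N\log N$, and combined with the $\|x-y\|_\T^{-1}$ decay a Schur test, interpolated between the $L^1\to L^\infty$ and $L^2\to L^2$ bounds for $K$, is already enough. The hard part is the regime $N^{-2}\lesssim|t(x)-t(y)|\lesssim N^{-1}$, where $|K(x,y)|$ can be as large as $|t(x)-t(y)|^{-1/2}\sim N^{1/2}$ on a set of pairs $(x,y)$ of measure that cannot be controlled, since $t$ is merely measurable. Estimating $K$ only through its absolute value then gives just $\|SS^\ast\|_{L^{4/3}\to L^4}\lesssim N^{3/4+\epsilon}$, hence only $N^{3/8+\epsilon}$ in Step 1, which is too weak. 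To close this $N^{1/8}$ gap one must retain the oscillation of $K(x,y)$ in $y$: organize the pairs $(x,y)$ according to the rational approximations $t(x)\approx a/q$ and $t(y)\approx a'/q'$ with small denominators — precisely the major-arc structure used in Section \ref{sec:smallq} — and exploit cancellation among distinct Gauss sums together with the Euler-totient count $\sum_{q\le Q}\Phi(q)\lesssim Q^2$ of admissible denominators. This is the main obstacle, and it is exactly what is carried out in \cite{MV}; it has no counterpart in the Euclidean Carleson estimate, where the analogous kernel $\int_{|\xi|\sim N}e^{2\pi i((x-y)\xi+(t(x)-t(y))\xi^2)}\,d\xi$ obeys the same size bounds but presents no arithmetic obstruction.
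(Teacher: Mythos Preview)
The paper does not contain a proof of Theorem \ref{thm:MV}; the result is quoted from \cite{MV} and used as a black box in Section \ref{sec:smallq}. So there is no ``paper's own proof'' to compare against, and the relevant question is simply whether your proposal stands on its own.

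It does not. Steps 1 and 2 are fine: the reduction to $\eta=N^{-1}$ by tiling in time is correct and standard, and the $TT^\ast$ setup together with the pointwise kernel bounds you record (Bourgain's dispersive estimate with $q=1$, and the van der Corput bound away from stationary phase) are accurate. But Step 3 is not an argument. You correctly identify that absolute-value bounds on $K(x,y)$ lose a factor of $N^{1/8}$ in the regime $N^{-2}\lesssim|t(x)-t(y)|\lesssim N^{-1}$, and then you write that closing this gap ``is exactly what is carried out in \cite{MV}''. That is a citation, not a proof; your description of what to do (organize by rational approximations of $t(x),t(y)$, exploit cancellation among Gauss sums) is too vague to constitute an argument, and in any case the measurable function $t(\cdot)$ need not take values near rationals with small denominator at all, so the major/minor arc dichotomy you suggest does not obviously apply.

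There is also a circularity: you invoke ``the major-arc structure used in Section \ref{sec:smallq}'' as part of the method, but Section \ref{sec:smallq} \emph{uses} Theorem \ref{thm:MV} as its main input. You cannot borrow that structure to prove the theorem it relies on. If you want to supply an actual proof, you should consult \cite{MV} directly; their argument for the $\eta=N^{-1}$ endpoint is not the Gauss-sum cancellation mechanism you sketch.
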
 \noindent When $\eta \sim N^{-1}$ this implies the sharp bound for the maximal operator with a restricted supremum. Note that in the `Euclidean window' $0 < t < N^{-2}$ we expect matters to reduce to known estimates for the solution on $\R$; this result gives an extension to the interval $0 < t < N^{-1}$. The theorem applies to any time interval of length $\eta$ since the interval can be translated by modulating the Fourier coefficients of $f$, and this does not change $\|f\|_{L^2}$.

Applying Theorem \ref{thm:MV} yields \begin{equation} \label{eq:MV} \big( \int_{\T} \sup_{t \in I}|u(x,t)|^4 dx \big)^{1/4} \lessapprox N^{3/4} \end{equation} (recall $|I| \lessapprox N^{-2\alpha} < N^{-1}$). Then from \eqref{eq:smallqUpper} and \eqref{eq:MV} we conclude that $$N^{1-\alpha/2}\big(\sum_{Q\in Y} \int_{Q} |u(x,t)|^4 dx dt \big)^{1/4} \lessapprox N^{3/4}, $$ as desired.

\subsection{Contribution of $X_q$ with $q > N^{\delta}$}\label{sec:largeq} We now turn to the rectangles belonging to some $X_q$ with $q > N^{\delta}$. We will control the contribution of each $X_q$ by using induction on the scale $N$. 

Suppose $(x,t) \in Q_j$ and $Q_j \in X_q$. We break the range of summation in $u$ into congruence classes mod $q$ to write \begin{align}\label{eq:decomp} u(x,t) &= \sum_{r=1}^{q} \sum_{l=0}^{\lfloor N/q \rfloor} e^{2\pi i ( (lq + r)x + (lq + r)^2t )} + O(q)  \\ \nonumber &= \sum_{r=1}^{q} e^{2\pi i (r^2t + rx)} \sum_{l=0}^{\lfloor N/q \rfloor} e^{2\pi i (l^2 q^2 t + lqx  )}e^{2\pi i (2rlqt)} + O(q).\end{align} By construction there is some integer $a$ with $(a,q) = 1$ such that $$|t_j - a/q| \lesssim \log(N)^2 \frac{2^m}{qN^2}.$$ Since $|t-t_j| \leq 1/N^2$ and $q \lesssim \log(N)^2 2^{m} $ we also have $|t - a/q| \lesssim \log(N)^2 \frac{2^m}{q N^2}.$ The choice of $a$ depends on $Q_j$, but below we will use a pointwise estimate that is uniform over possible values of $a$.

Since $|t - a/q| \lesssim \log(N)^2 \frac{2^m}{qN^2}$ we have $$2rlq|t - a/q| \lesssim \log(N)^2 2^{m} N^{-1}  $$ and therefore  \begin{equation} \label{eq:error} e^{2\pi i (2rlqt)} = e^{2\pi i (2rlq[t - a/q])} = 1 + O(\log(N)^2 2^{m} N^{-1}) \end{equation} with the implicit constant independent of $a$ and $q$. It follows from \eqref{eq:qBound}, \eqref{eq:decomp}, and \eqref{eq:error} that we can write \begin{equation}\label{eq:mainDecomp}
u(x,t) = \sum_{r=1}^{q} e^{2\pi i (r^2t + rx)} \sum_{l=0}^{\lfloor N/q \rfloor} e^{2\pi i (l^2 q^2 t + lqx  )} + E_q(x,t), \ \ \ \ \ (x,t) \in Q \in X_q\end{equation} where $E_q$ is supported on $\bigcup_{Q \in X_q} Q$ and $$|E_q(x,t)|\mathbbm{1}_{Q}(x,t) \lesssim q\frac{N}{q} \log(N)^2 2^{m} N^{-1} \lesssim \log(N)^2 2^{m}, \ \ \ \ \ Q \in X_q$$   

\noindent (note that we have absorbed the error $O(q)$ from \eqref{eq:decomp} into the definition of $E_q$). Define the function $E(x,t) = \sum_{q} E_{q}(x,t)$. Since the cubes $Q \in X$ are pairwise disjoint we have $$|E(x,t)| \lesssim \log(N)^2 2^{m}, \ \ \ \ (x,t) \in \bigcup_{Q \in X} Q.$$ Also define functions $v_q(x,t)$ and $w_q (x,t)$ by setting \begin{align*}&v_q(x,t) = \sum_{r=1}^{q} e^{2\pi i (r^2t + rx)} \sum_{l=0}^{\lfloor N/q \rfloor} e^{2\pi i (l^2 q^2 t + lqx  )},\\  &w_{q}(x,t)  = \sum_{l=0}^{\lfloor N/q \rfloor} e^{2\pi i (l^2 q^2 t + lqx  )},  \ \ \ \ (x,t) \in \bigcup_{Q\in X_q} Q . \end{align*}

\noindent  We will now argue by induction on the scale. The main idea is that each $w_q$ can be viewed as a Weyl sum at scale $\sim N/q$, and so we can hope to use our inductive hypothesis to count the number of cubes in $X_q$. 

Note that since $|t- a/q| < 1/q^2$ for some $(a, q) = 1$ the classical Weyl bound tells us that $$|v_q(x,t)| \lesssim (\log(q)q)^{1/2} |w_q (x,t)|.$$ It now follows from the above estimates that  \begin{align} \label{eq:qContribution} \big(\sum_{Q \in X_q} \int_{Q} |u(x,t)|^4 dxdt \big)^{1/4} \lesssim & \big(\log(q)^2 q^2 \sum_{Q \in X_q} \int_{Q}|w_{q}(x, t)|^4 dx dt \big)^{1/4}  \\ \nonumber &+ \big(\int_{\bigcup_{Q \in X_q}}|E(x,t)|^4 dxdt\big)^{1/4} \end{align} and hence \begin{align} \label{eq:qContribution2} \sum_{q=N^{\delta}}^{c2^{m}\log(N)^2}\sum_{Q \in X_q} \int_{Q} |u(x,t)|^4 dxdt  \lesssim \sum_{q=N^{\delta}}^{c2^{m}\log(N)^2}&\log(q)^2 q^2 \sum_{Q \in X_q} \int_{Q}|w_{q}(x, t)|^4 dx dt  \\ \nonumber &+ O(N^{-4 + 2\alpha}2^{4m} \log(N)^8),  \end{align} with the second term on the right coming from the bound for the error $|E|$. Since $$2^{m} \log(N)^2 < N^{3/4}$$ the error term makes an acceptable contribution to \eqref{eq:oneDim} and we henceforth drop it from our discussion. 

Now observe that if we set $$w(x,t) = \sum_{l=0}^{\lfloor N/q \rfloor} e^{2\pi i(lx + l^2 t )  }$$ then we have \begin{equation} \label{eq:inductiveWeyl} w_q(x,t) = w(qx, q^2t).\end{equation} This identity puts us in a position to use induction at scale $N/q$, although we need to do a little more work to ensure that the induction hypotheses are satisfied. 

It is evident that $w_q$ is $(1/q)$-periodic in the $x$ variable and $(1/q^2)$-periodic in the $t$ variable. We exploit this fact with the following lemma. 
 
 \begin{lemma}\label{lem:qPeriod} Fix $q$ and define the collection $X_q$ as above. Then there exists a collection of rectangles $\overline{X}_q$ such that \begin{enumerate}
 	\item Each $\overline{Q} \in \overline{X}_q$ has dimensions approximately $N^{-2+\alpha} \times N^{-4 + 2\alpha}$
 	\item  $\overline{Q} \subset [0,1/q]\times [0, 1/q^2]$ for each $\overline{Q} \in \overline{X}_q$
 \item We have $$\sum_{Q \in X_q} \int_{Q} |w_{q}(x,t)|^4 dxdt \lesssim \sum_{\overline{Q} \in \overline{X}_q} \int_{\overline{Q}} |w_{q}(x,t)|^4 dxdt.$$ 
\item The collection $\overline{X}_{q}$ is a union of at most $O(q)$ one-dimensional collections of rectangles. \end{enumerate}  \end{lemma}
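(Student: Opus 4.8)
The plan is to exploit the double periodicity of $w_q$ — namely $(1/q)$-periodicity in $x$ and $(1/q^2)$-periodicity in $t$, which is immediate from \eqref{eq:inductiveWeyl} — to "fold" the rectangles $Q \in X_q$ down into the fundamental domain $[0,1/q] \times [0,1/q^2]$ without losing mass. First I would observe that each $Q = I \times J \in X_q$ has $|I| \approx N^{-2+\alpha}$ and $|J| \approx N^{-4+2\alpha}$; since $q \lesssim \log(N)^2 2^m \lesssim N^{2-2\alpha}\log(N)^2$, the side length $|I|$ is comparable to or smaller than $1/q$ up to logarithmic factors (and similarly $|J|$ versus $1/q^2$), so after possibly subdividing each $Q$ into a bounded-times-$\log$ number of pieces we may assume $|I| \le 1/(2q)$ and $|J| \le 1/(2q^2)$. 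Then each such piece $Q$ meets at most two of the translated copies of the fundamental cell in each coordinate; splitting again, I may assume $Q$ is contained in a single translate $[\,jq^{-1},(j{+}1)q^{-1}\,] \times [\,k q^{-2},(k{+}1)q^{-2}\,]$ of $[0,1/q]\times[0,1/q^2]$. Translating $Q$ by $(-j/q,-k/q^2)$ gives a rectangle $\overline{Q} \subset [0,1/q]\times[0,1/q^2]$ on which, by periodicity, $\int_{\overline{Q}}|w_q|^4 = \int_{Q}|w_q|^4$; letting $\overline{X}_q$ be the resulting collection establishes (1), (2), and (3) (the inequality rather than equality in (3) just absorbs the bounded subdivision steps).

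For (4) — the one-dimensionality claim — the key point is that the original collection $X_q \subset X$ is already one-dimensional at scale $(N,\alpha)$: each vertical line $\{x = x_0\}$ meets at most two $Q \in X_q$. After translating, a given vertical line $\{x = \bar{x}_0\}$ in $[0,1/q]$ can only receive contributions from the $O(q)$ preimage lines $\{x = \bar{x}_0 + j/q\}$, $j = 0,\dots,q-1$, in $[0,1]$; each such preimage line meets at most two members of $X_q$ (hence $O(1)$ of their bounded subdivisions), so $\{x=\bar{x}_0\}$ meets $O(q)$ members of $\overline{X}_q$ in total. To package this as "$O(q)$ one-dimensional collections" I would do a greedy coloring: repeatedly extract a maximal one-dimensional subcollection, and argue that after $O(q)$ rounds every rectangle has been removed — or, more cleanly, color each $\overline{Q}$ by the index $j$ of the translate it came from, which directly partitions $\overline{X}_q$ into $q$ subcollections each of which is the image of (a subdivision of) a one-dimensional collection under a fixed translation, hence one-dimensional up to the bounded subdivision factor.

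The main obstacle I anticipate is the bookkeeping around the bounded-but-not-trivial subdivisions: one needs to track carefully that (a) subdividing $Q$ to force it inside one fundamental cell costs only an $O(\log N)$ or even $O(1)$ factor in the number of rectangles and does not inflate the $L^4$ integral (it does not, since the pieces are disjoint and their integrals simply add), and (b) that these subdivisions are compatible with the one-dimensionality accounting in (4). A secondary subtlety is that the dimensions of $\overline{Q}$ are only asserted to be "approximately" $N^{-2+\alpha}\times N^{-4+2\alpha}$, so I must make sure the subdivided pieces still have side lengths within a constant factor of these targets — this is fine because $q^{-1} \gtrsim N^{-2+\alpha}/\log(N)^2$ forces at most $O(\log(N)^2)$ pieces, which can be re-absorbed since $\eta$ was chosen so that $N^\eta$ dominates all such logarithmic losses. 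None of these steps is deep; the lemma is essentially a change-of-variables statement dressed up with covering combinatorics.
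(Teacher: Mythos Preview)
Your proposal is correct and follows essentially the same route as the paper: exploit the $(1/q, 1/q^2)$-periodicity of $w_q$ to translate each $Q \in X_q$ into the fundamental domain, then use the one-dimensionality of $X_q$ together with the fact that a vertical line in $[0,1/q]$ has $q$ preimages in $[0,1]$ to bound the overlap by $O(q)$. Your treatment is in fact more careful than the paper's own proof, which glosses over the boundary-straddling issue you handle via subdivision; note also that since $q \lesssim \log(N)^2 N^{2-2\alpha}$ while the rectangle sides are $N^{-2+\alpha}$ and $N^{-4+2\alpha}$, the rectangles are genuinely much smaller than the fundamental cell (by a factor of $N^{-\alpha}$ up to logs), so the subdivision costs only an $O(1)$ factor rather than the $O(\log(N)^2)$ you feared.
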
 
 
 \begin{proof} The first three points follow from the fact that $w_{q}(x,t) = w(qx, q^2t)$ as described above, hence $w_{q}(x,t)$ is $(1/q)$-periodic in $x$ and $(1/q^2)$-periodic in $t$. Indeed we can let $\overline{Q}$ be the translate of $Q$ in $[0,1/q] \times [0,1/q^2]$ such that $$w_{q}(x + 1/q,t + 1/q^2)\mathbbm{1}_{Q}(x + n/q,t + m/q^2) = w_{q}(x,t)\mathbbm{1}_{\overline{Q}}(x,t), \ \ \ \ (x,t) \in [0, 1/q] \times [0,1/q^2]$$ for some integers $n,m$. The last point (4) follows from the fact that $\overline{Q}$ constructed in this matter can only overlap on the order of $O(q)$, since $X_q$ is one-dimensional.  
\end{proof}

 \noindent It is here that we are crucially using the one-dimensional hypothesis on the collection of rectangles, since this property ensures that there is limited overlap when shifting the rectangles as in Lemma \ref{lem:qPeriod}. We will see below that overlap on the order of $O(q)$ is admissible. 
 
 To make the last point more precise, define a map $Q \rightarrow \tau(Q)$ by letting $\tau(Q) = \overline{Q}$ be the copy of $Q$ mod $(1/q, 1/q^2)$ chosen in Lemma \ref{lem:qPeriod}. Recall from \eqref{eq:qContribution} that up to negligible error terms we have $$ \sum_{Q \in X_q} \int_{Q} |u(x,t)|^4 dxdt  \lesssim \log(q)^2 q^2 \sum_{Q \in X_q} \int_{Q}|w_{q}(x, t)|^4 dx dt.$$ By Lemma \ref{lem:qPeriod} we may therefore find a sub-collection $X'_{q} \subset X_{q}$ such that $\tau(X'_{q})$ is a one-dimensional collection contained in $[0,1/q]\times [0, 1/q^2]$ and such that \begin{equation} \label{eq:induct1}  \sum_{Q \in X_q} \int_{Q} |u(x,t)|^4 dxdt \lesssim \log(q)^2q^3 \sum_{Q \in X_q'} \int_{\tau(Q)}|w_{q}(x, t)|^4 dx dt. \end{equation} Since the $\tau(Q)$ are contained in $[0,1/q] \times [0,1/q^2]$ we can make a change of variables $(y,s) = (qx,q^2t)$ to obtain \begin{equation} \label{eq:induct2} \log(q)^2q^3 \sum_{Q \in X_q'} \int_{\tau(Q)}|w_{q}(x, t)|^4 dx dt \leq \log(q)^2 \sum_{R \in Y_q} \int_{R} |w(x,t)|^4 dx dt, \end{equation} where $Y_q$ is the image of $X_{q}'$ under the rescaling. In particular each $R \in Y_q$ is an axis-parallel rectangle of dimensions approximately $$ qN^{-2 + \alpha} \times q^2 N^{-4 + 2\alpha},$$ and each $R \in Y_q$ is contained in $[0,1]^2$. Moreover $Y_{q}$ is one-dimensional. Note that we have applied a version of the parabolic rescaling that plays a crucial role in the theory of the Euclidean operator $e^{it \Delta_{\R^n}}$. 
 
 Now using \eqref{eq:induct1}, \eqref{eq:induct2}, and the fact that $Y_q$ is one-dimensional, we conclude that \begin{equation}
\label{eq:induct3} \sum_{Q \in X_q} \int_{Q} |u(x,t)|^4 dxdt \lesssim N^{-4 + 2\alpha}\log(q)^2 q^2 \int_{\T} \sup_{0 < t < 1}|w(x,t)|^4 dx dt.
 \end{equation} Recall that $w(x,t) = \sum_{n=1}^{\lfloor N/q \rfloor} e^{2\pi i (nx + n^2 t)},$ so $w$ is a scale $ N/q$ Weyl sum. Then by our induction hypothesis $$\int_{\T}\sup_{0 < t < 1}|w(x,t)|^4 dx \leq C_{\epsilon}^4 N^{4 \epsilon} q^{-4\epsilon} N^{3} q^{-3}. $$ Inserting this into \eqref{eq:induct3} yields \begin{equation} \label{eq:induct4} N^{4-2\alpha} \sum_{Q \in X_q} \int_{Q} |u(x,t)|^4 dxdt \leq CC_{\epsilon}^4 q^{-4\epsilon}\log(q)^2 q^{-1} N^{3 + 4\epsilon}
 \end{equation} Recall $q \geq N^{\delta}$ and $\delta = \delta(\epsilon)$. We may therefore assume $N \gtrsim_{\epsilon} 1$ has been chosen large enough that $$Cq^{-2\epsilon}\log(q)^2 \leq 1.$$ Then by plugging \eqref{eq:induct4} into \eqref{eq:qContribution2} we arrive at \begin{align*} N^{4-2\alpha}\sum_{q=N^{\delta}}^{c2^{m}\log(N)^2}\sum_{Q \in X_q} \int_{Q} |u(x,t)|^4 dxdt  &\leq N^{-2\delta\epsilon}\sum_{q= N^{\delta}}^{c\log(N)^2 2^{m}} CC_{\epsilon}^4q^{-1} N^{3 + 4\epsilon} \\ &\leq N^{-2\delta \epsilon}C \log(N)^3 (C_{\epsilon}N^{3/4 + \epsilon})^{4}. \end{align*} Finally we may assume that $N \gtrsim_{\epsilon}1$ has been chosen large enough that $$ N^{-\delta \epsilon}C \log(N)^3 \leq (1/3)^4.$$  This shows that rectangles from $X_q$ with $q \geq N^{\delta}$ give an acceptable contribution to \eqref{eq:oneDim}. 
 
 This was the only remaining case, and so we conclude from \eqref{eq:oneDim} that \begin{align*}  \big(\int_{\T} \sup_{0 < t < 1}|u(x,t)|^4 dx \big)^{1/4} & \lesssim \log(N)^{1/4} \big(\int_{E_k} \sup_{0 < t < 1}|u(x,t)|^4 dx \big)^{1/4} \\  &\lesssim \log(N)^{1/4}  N^{1 -\alpha/2}\big(\sum_{Q_{\alpha}}  \int_{Q_{\alpha}} |u(x,t)|^4 dxdt \big)^{1/4} \\ & \leq C\log(N)^{1/4} N^{-\delta\epsilon}  C_{\epsilon} N^{3/4  +\epsilon} \\ &\leq C_{\epsilon} N^{3/4 + \epsilon}, \end{align*} the last line following as long as we choose $\delta < \epsilon^2$ such that $C N^{-\delta\epsilon} \log(N)^{1/4} \leq 1$. This completes the proof of Theorem \ref{prop:WeylMaximal}. 
 
 \begin{remark}
 	In this section we have worked with an equivalent version of the maximal problem that involves estimating $w_N$ on one-dimensional collections of axis-parallel rectangles in $\mathbb{T}^2$. We have chosen to present the argument in this manner to illustrate the connection with ideas of Du and Zhang \cite{DZ}, who prove the Euclidean analogue of Theorem \ref{prop:WeylMaximal} by analyzing the behavior of the associated Fourier extension operator on lower-dimensional unions of cubes in space-time. It is straightforward (and perhaps more natural) to rewrite the argument in this section to directly involve the maximal function. Indeed, the rectangles in $X_q$ can be replaced by intervals consisting of their projections to $[0,1]$, and the two-dimensional integrals used in this section can be replaced by one-dimensional integrals of the maximal function. The argument then goes through as above, with minor changes made. 
 	
 \end{remark}

\section{Corollaries of the maximal estimate}\label{sec:cor}

As a corollary of Theorem \ref{prop:WeylMaximal} we obtain the following discrete `one-dimensional' level set estimate for $w_{N}$. 

\begin{prop}\label{prop:mainWeylProp} Let $w_N$ denote a scale-$N$ quadratic Weyl sum as above. Let $A > 0$ be an arbitrary constant independent of other parameters. Fix $3/4 \leq \alpha < 1$. Let $X$ be a one-dimensional collection of rectangles $Q$ at scale $(N,\alpha)$ and suppose $$\|w_N\|_{L^{\infty}(Q)} \geq  AN^{\alpha}$$ for each $Q$. Then for every $\epsilon > 0$  there exists $C(\epsilon,A) > 0$ such that $$\#X \leq C(\epsilon,A)N^{\epsilon}N^{5(1-\alpha)}. $$ 
\end{prop}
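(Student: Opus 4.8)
The plan is to derive Proposition \ref{prop:mainWeylProp} directly from Theorem \ref{prop:WeylMaximal} by a Chebyshev-type argument, exploiting the one-dimensionality of $X$ to pass from counting rectangles to a measure bound on their $x$-projections. First I would use Lemma \ref{lem:const}: since $\|w_N\|_{L^\infty(Q)} \geq AN^\alpha$ on each rectangle $Q$ of dimensions $\sim N^{-2+\alpha} \times N^{-4+2\alpha}$ (shrinking the rectangle slightly if necessary to match the scale in Lemma \ref{lem:const}, or rather invoking the version at the stated scale), the sum $w_N$ is comparable to $N^\alpha$ at every point of $Q$, say $|w_N(x,t)| \gtrsim_A N^\alpha$ for $(x,t) \in Q$. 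In particular, for each $Q = I \times J \in X$ and each $x \in I$ we have $\sup_{0<t<1}|w_N(x,t)| \gtrsim_A N^\alpha$, so the projection $\pi_x(Q)$ lies inside the super-level set $E_\alpha := \{x \in \T : \sup_{0<t<1}|w_N(x,t)| \gtrsim_A N^\alpha\}$.

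Next I would estimate $|E_\alpha|$ using Theorem \ref{prop:WeylMaximal} and Chebyshev's inequality: since $\|\sup_{0<t<1}|w_N|\|_{L^4(\T)} \leq C_\epsilon N^{3/4+\epsilon}$, we get
\begin{equation*}
|E_\alpha| \lesssim_A (N^\alpha)^{-4} \int_\T \sup_{0<t<1}|w_N(x,t)|^4\, dx \lesssim_{A,\epsilon} N^{3+4\epsilon - 4\alpha}.
\end{equation*}
Now the one-dimensionality of $X$ enters: by definition, each point $x_0 \in [0,1]$ lies in $\pi_x(Q)$ for at most two rectangles $Q \in \mathcal{Q}$, so
\begin{equation*}
\sum_{Q \in X} |\pi_x(Q)| \leq 2\,\Big| \bigcup_{Q \in X} \pi_x(Q) \Big| \leq 2|E_\alpha|.
\end{equation*}
Since each $|\pi_x(Q)| \sim N^{-2+\alpha}$, this yields $\#X \cdot N^{-2+\alpha} \lesssim |E_\alpha| \lesssim_{A,\epsilon} N^{3+4\epsilon-4\alpha}$, hence
\begin{equation*}
\#X \lesssim_{A,\epsilon} N^{3 + 4\epsilon - 4\alpha} \cdot N^{2-\alpha} = N^{5(1-\alpha) + 4\epsilon}.
\end{equation*}
Relabeling $\epsilon$ (replacing $4\epsilon$ by $\epsilon$) gives the claimed bound $\#X \leq C(\epsilon,A)N^\epsilon N^{5(1-\alpha)}$.

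I do not expect any serious obstacle here; this is essentially a packaging of the maximal estimate. The one mild technical point to be careful about is matching scales between the rectangles in the definition of a one-dimensional collection at scale $(N,\alpha)$ (dimensions $\sim N^{-2+\alpha-\eta} \times N^{-4+2\alpha-2\eta}$) and the rectangle appearing in Lemma \ref{lem:const} (dimensions $N^{-2+\alpha-\eta} \times N^{-3+\alpha-\eta}$): the $t$-side in the definition is shorter, so a rectangle $Q$ in the collection is contained in (a dilate of) the locally-constant rectangle $Q_\alpha$ centered at a point where $|w_N| \geq AN^\alpha$, and the locally-constant conclusion applies on all of $Q$. Also one should note that the implicit constants from Lemma \ref{lem:const} depend on $A$, which is why the final constant depends on $A$; since $A$ is a fixed constant independent of other parameters this causes no trouble. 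The $N^\eta$ losses from the $\eta$ in the rectangle dimensions are absorbed into the $N^\epsilon$ as usual (recall $\eta$ is taken much smaller than all other small parameters).
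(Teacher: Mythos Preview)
Your proposal is correct and is essentially the same argument as the paper's: both use Lemma \ref{lem:const} to make $|w_N|\gtrsim_A N^\alpha$ on each $Q$, invoke the one-dimensionality to pass to the maximal function, and apply Theorem \ref{prop:WeylMaximal}. The only cosmetic difference is that the paper organizes the computation as a two-sided bound on $\big(\sum_{Q\in X}\int_Q |w_N|^4\big)^{1/4}$ in space-time, whereas you project to $x$ and run Chebyshev on the super-level set; these are equivalent rearrangements of the same estimate.
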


\noindent Notice that the number of rectangles in a one-dimensional collection at scale $(N, \alpha)$ is always less than $CN^{2-\alpha}$, and so the proposition is trivial in the range $\alpha \leq 3/4$. 

\begin{proof} This is a consequence of the maximal estimate along with Lemma \ref{lem:const}. Indeed we have $$ \big( \sum_{Q \in X} \int_{Q} |w_N(x,t)|^4 dx dt \big)^{1/4} \gtrsim N^{\alpha } N^{-6/4 + 3\alpha/4} (\# X)^{1/4}, $$ and on the other hand since the collection is one-dimensional $$\big( \sum_{Q \in X} \int_{Q} |w_N(x,t)|^4 dx dt \big)^{1/4}  \lesssim N^{-1 + \alpha/2} \big(\int_{\T} \sup_{0<t<1}|w_N(x,t)|^4 dx \big)^{1/4} \leq C_{\epsilon}N^{\epsilon} N^{-1/4 + \alpha/2}.$$ Therefore $$(\# X)^{1/4} \lesssim C_{\epsilon}N^{2\epsilon} N^{5/4 - 5\alpha/4}$$ which implies the proposition.  
\end{proof}
Proposition \ref{prop:mainWeylProp} implies the following. 

\begin{cor}\label{cor:size} Let $\mathcal{S}_\alpha (N)$ denote the set of $x \in \T$ such that $$\sup_{0< t < 1}\bigg| \sum_{n=1}^{N} e^{2\pi i (xn + tn^2)} \bigg| \geq cN^{\alpha}.$$ Then $\mathcal{S}_{\alpha} (N)$ has measure $O_{\epsilon}(N^{3 - 4\alpha + \epsilon}).$  
\end{cor}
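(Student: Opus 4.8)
The plan is to deduce Corollary \ref{cor:size} from Proposition \ref{prop:mainWeylProp} by a covering argument that converts the set $\mathcal{S}_\alpha(N)$ into a one-dimensional collection of rectangles of the type to which the proposition applies. First I would handle the trivial ranges: if $\alpha \le 3/4$ then $N^{3-4\alpha} \gtrsim 1 \ge |\mathcal{S}_\alpha(N)|$, and if $\alpha > 1$ the set is empty (the Weyl sum is at most $N$), so I may assume $3/4 < \alpha \le 1$. Then for $x \in \mathcal{S}_\alpha(N)$ pick $t_x$ with $|w_N(x,t_x)| \ge cN^\alpha$. Cover $[0,1]$ by intervals $I$ of length $\sim N^{-2+\alpha}$ overlapping only at endpoints; for each such $I$ meeting $\mathcal{S}_\alpha(N)$ choose a representative $x_I \in I \cap \mathcal{S}_\alpha(N)$ and an interval $J_I$ of length $\sim N^{-4+2\alpha}$ containing $t_{x_I}$, and set $Q_I = I \times J_I$. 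By construction each $x_0 \in [0,1]$ lies in $\pi_x(Q_I)$ for at most two $I$, so the collection $X = \{Q_I\}$ is one-dimensional at scale $(N,\alpha)$, and $\|w_N\|_{L^\infty(Q_I)} \ge cN^\alpha$. Proposition \ref{prop:mainWeylProp} (with $A = c$) then gives $\#X \lesssim_\epsilon N^\epsilon N^{5(1-\alpha)}$.

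To finish I would bound $|\mathcal{S}_\alpha(N)|$ in terms of $\#X$. Every $x \in \mathcal{S}_\alpha(N)$ lies in some $I$ with $I \cap \mathcal{S}_\alpha(N) \ne \emptyset$, hence in some $\pi_x(Q_I)$ with $Q_I \in X$; since each $|\pi_x(Q_I)| \sim N^{-2+\alpha}$, we get
\begin{equation*}
|\mathcal{S}_\alpha(N)| \le \sum_{Q_I \in X} |\pi_x(Q_I)| \lesssim N^{-2+\alpha}\, \#X \lesssim_\epsilon N^{-2+\alpha} N^\epsilon N^{5-5\alpha} = N^{3-4\alpha+\epsilon},
\end{equation*}
which is the claimed estimate (after renaming $\epsilon$). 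One small subtlety to address is that the representative point $x_I$ is chosen from $I \cap \mathcal{S}_\alpha(N)$, so the interval $J_I$ certifying a large value of $w_N$ is anchored at $x_I$ rather than at a general $x \in I$; this is fine because Proposition \ref{prop:mainWeylProp} only requires $\|w_N\|_{L^\infty(Q_I)} \ge A N^\alpha$, which holds since $(x_I, t_{x_I}) \in Q_I$.

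I do not expect a genuine obstacle here — the content is entirely in Theorem \ref{prop:WeylMaximal} and Proposition \ref{prop:mainWeylProp}, and the corollary is a routine packaging argument. The only points requiring a little care are: (i) making sure the covering really produces a one-dimensional collection (controlled by choosing the $I$'s with bounded overlap, and noting that within a fixed $I$ we use a single rectangle $Q_I$, so the overlap count is at most two); (ii) checking the exponent bookkeeping $N^{-2+\alpha} \cdot N^{5(1-\alpha)} = N^{3-4\alpha}$; and (iii) absorbing the implicit constants, including the dependence on $c$ and on $\epsilon$, into $O_\epsilon(\cdot)$. None of these is more than a short verification.
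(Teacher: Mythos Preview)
Your argument is correct and is exactly the approach the paper takes: cover $\mathcal{S}_\alpha(N)$ by intervals of length $\sim N^{-2+\alpha}$, promote each to a rectangle in a one-dimensional collection, and apply Proposition~\ref{prop:mainWeylProp} to count them. The paper states this in one line; you have simply written out the details (including the trivial ranges and the exponent arithmetic) that the paper leaves implicit.
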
 \noindent The corollary follows by choosing intervals of the form $[a/N^{-2+\alpha}, (a+1)/N^{-2+\alpha}]$ which contain a point from $S_{\alpha}$ and then applying Proposition \ref{prop:mainWeylProp}.

As a consequence of Theorem \ref{prop:WeylMaximal} we also obtain the following refined Strichartz-type estimate, which should be compared to the main estimates of Du and Zhang in \cite{DZ}. 

\begin{cor}\label{cor:WeylCor} Let $w_N$ be a scale $N$ quadratic Weyl sum. Let $Q_j$ be a collection one-dimensional rectangles in $[0,1]^2$ at scale $(N,1)$ (so the dimensions are approximately $N^{-1 }\times N^{-2}$). Then for any $\epsilon > 0$ one has $$\big( \int_{\bigcup_j Q_j} |w_{N}(x,t)|^4 dx dt \big)^{1/4} \lesssim_{\epsilon} N^{1/4 + \epsilon}.$$ 
\end{cor}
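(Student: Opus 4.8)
\textbf{Proof proposal for Corollary \ref{cor:WeylCor}.}

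The plan is to reduce the scale-$(N,1)$ estimate to the family of scale-$(N,\alpha)$ estimates already established, by decomposing the union $\bigcup_j Q_j$ according to the size of $|w_N|$ on each rectangle. First I would fix a one-dimensional collection $\{Q_j\}$ at scale $(N,1)$ and split it into dyadic pieces: for each dyadic value $\lambda$ with $1 \le \lambda \le N$, let $X_\lambda = \{Q_j : \|w_N\|_{L^\infty(Q_j)} \sim \lambda\}$. There are only $O(\log N)$ such values, so it suffices to bound the contribution of a single $X_\lambda$ and then sum. On each $Q_j \in X_\lambda$ we have $\|w_N\|_{L^\infty(Q_j)} \lesssim \lambda$, and since $|Q_j| \sim N^{-3}$ while $\#X_\lambda \le C N$ (the total number of rectangles in a one-dimensional scale-$(N,1)$ collection), the trivial bound gives $\int_{\bigcup_{Q_j \in X_\lambda}} |w_N|^4 \lesssim \lambda^4 N^{-3} \cdot \#X_\lambda$. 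The point is to get a better bound on $\#X_\lambda$ once $\lambda$ is genuinely large.

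The key step is to handle the regime $\lambda \ge N^{3/4}$ using the level set estimate. Write $\lambda \sim N^\alpha$ for some $\alpha \in [3/4, 1]$. The rectangles in $X_\lambda$ have dimensions $N^{-1} \times N^{-2}$, which are smaller than the scale-$(N,\alpha)$ dimensions $N^{-2+\alpha} \times N^{-4+2\alpha}$; I would group them into the coarser scale-$(N,\alpha)$ tiling, noting that each scale-$(N,\alpha)$ rectangle $Q$ with $\|w_N\|_{L^\infty(Q)} \gtrsim N^\alpha$ contains at most $O(N^{(2-\alpha)-1} \cdot N^{(4-2\alpha)-2}) = O(N^{3-3\alpha})$ of the smaller rectangles, and that the resulting coarser collection is still one-dimensional (up to a bounded overlap factor, using the one-dimensionality of $\{Q_j\}$ and splitting into $O(1)$ subcollections as needed). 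By Lemma \ref{lem:const} applied at scale $\alpha$, $|w_N| \gtrsim N^\alpha$ on all of each such $Q$, so Proposition \ref{prop:mainWeylProp} bounds the number of coarse rectangles by $C_\epsilon N^{\epsilon} N^{5(1-\alpha)}$, and therefore $\#X_\lambda \lesssim_\epsilon N^\epsilon N^{5(1-\alpha)} \cdot N^{3-3\alpha} = N^\epsilon N^{8 - 8\alpha}$. Plugging this into the trivial bound gives
\begin{equation*}
\int_{\bigcup_{Q_j \in X_\lambda}} |w_N|^4 \lesssim_\epsilon N^{4\alpha} \cdot N^{-3} \cdot N^{\epsilon} N^{8-8\alpha} = N^{\epsilon} N^{5 - 4\alpha} \le N^{\epsilon} N^{2}
\end{equation*}
since $\alpha \ge 3/4$; taking fourth roots yields $N^{1/4 + \epsilon/4}$ for this piece. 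For the complementary regime $\lambda < N^{3/4}$ I would use instead the trivial count $\#X_\lambda \le CN$ together with $\lambda^4 < N^3$, giving $\int |w_N|^4 \lesssim N^3 \cdot N^{-3} \cdot N = N$, again yielding $N^{1/4}$. Summing the $O(\log N)$ dyadic contributions and absorbing the logarithm into $N^\epsilon$ completes the proof.

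The main obstacle I anticipate is bookkeeping the change of scale cleanly: one must verify that regrouping the fine $(N,1)$-rectangles into the coarse $(N,\alpha)$-tiling preserves (after splitting into $O(1)$ pieces) the one-dimensional property required by Proposition \ref{prop:mainWeylProp}, and that the multiplicity count $N^{3-3\alpha}$ of fine rectangles per coarse rectangle is correct and is not itself degraded by overlap. This is where the one-dimensionality hypothesis on $\{Q_j\}$ is essential — without it the fine rectangles could stack up in the $t$-direction and the multiplicity bound would fail. A secondary point to check is that the $L^\infty$ size of $w_N$ on a fine rectangle and on the coarse rectangle containing it agree up to constants, which is exactly the content of Lemma \ref{lem:const} provided the coarse scale $N^{-2+\alpha} \times N^{-4+2\alpha}$ is at most the locally-constant scale $N^{-2+\alpha-\eta} \times N^{-3+\alpha-\eta}$; here one uses $\alpha \le 1$ so that $N^{-4+2\alpha} \le N^{-3+\alpha}$, which holds with room to spare.
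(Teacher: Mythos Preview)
Your argument contains a genuine error in the scale comparison. For $\alpha < 1$ the scale-$(N,\alpha)$ rectangles have dimensions $N^{-2+\alpha}\times N^{-4+2\alpha}$, and since $-2+\alpha < -1$ and $-4+2\alpha < -2$, these are \emph{smaller} than the scale-$(N,1)$ rectangles $N^{-1}\times N^{-2}$, not larger. So ``grouping the fine $(N,1)$-rectangles into the coarser $(N,\alpha)$-tiling'' goes the wrong way: each $Q_j$ at scale $(N,1)$ contains roughly $N^{3-3\alpha}$ scale-$(N,\alpha)$ tiles, not the other way around. If you try to repair this by instead associating to each $Q_j\in X_\lambda$ a single scale-$(N,\alpha)$ tile $R(Q_j)$ containing the point where the sup is attained, you do get a (nearly) one-dimensional collection to which Proposition~\ref{prop:mainWeylProp} applies, but the map $Q_j\mapsto R(Q_j)$ is only $O(1)$-to-one, so you obtain $\#X_\lambda \lesssim_\epsilon N^{\epsilon}N^{5(1-\alpha)}$ rather than $N^{8(1-\alpha)}$. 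Feeding this into your trivial bound gives $\int\lesssim N^{4\alpha-3}\cdot N^{5-5\alpha+\epsilon}=N^{2-\alpha+\epsilon}$, which at $\alpha=3/4$ is $N^{5/4+\epsilon}$ and hence only $N^{5/16+\epsilon}$ after the fourth root. The level-set count in Proposition~\ref{prop:mainWeylProp} simply does not retain enough information to recover the sharp exponent this way.

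The paper's proof is a one-line application of Theorem~\ref{prop:WeylMaximal} that bypasses all of this. Because the collection is one-dimensional, for each fixed $x$ the slice $\{t:(x,t)\in\bigcup_j Q_j\}$ has measure at most $2N^{-2}$, so
\[
\int_{\bigcup_j Q_j}|w_N|^4\,dx\,dt \le 2N^{-2}\int_0^1 \sup_{0<t<1}|w_N(x,t)|^4\,dx \lesssim_\epsilon N^{-2}\cdot N^{3+4\epsilon}=N^{1+4\epsilon},
\]
which gives the claim immediately. The point is that the one-dimensional hypothesis lets you pass directly to the maximal function without any dyadic decomposition in $\lambda$ or any appeal to Proposition~\ref{prop:mainWeylProp}.
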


\begin{proof} This is an immediate consequence of Theorem \ref{prop:WeylMaximal} since the collection is one-dimensional.  
\end{proof} \noindent Corollary \ref{cor:WeylCor} is sharp, as is seen by taking one of the $Q_j$ to contain $[0, 10^{-6}N^{-1}] \times [0, 10^{-6}N^{-2}]$. 

Finally, the following is also an easy consequence of Theorem \ref{prop:WeylMaximal}. 

\begin{cor} Let $q$ be an integer with $1 \leq q < N$. Suppose $\mathcal{P} = \{q, 2q, 3q,..., kq \}$ is contained in $[1,N]$, so that $\#\mathcal{P} \leq \lfloor N/q \rfloor$. Then for every $\epsilon > 0$ there is $C_{\epsilon} > 0$ such that $$\bigg\|\sup_{0 < t < 1} \big|\sum_{n\in \mathcal{P}} e^{2\pi i (n(\cdot) + n^2 t)}\big| \bigg\|_{L^{4}(\T)} \leq C_{\epsilon} q^{-3/4}N^{3/4 + \epsilon}.  $$
\end{cor}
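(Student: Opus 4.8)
The plan is to reduce the sum over the arithmetic progression $\mathcal{P} = \{q, 2q, \dots, kq\}$ to a full quadratic Weyl sum at the smaller scale $N/q$ via the same change of variables used in Section \ref{sec:largeq}. Write
\begin{equation*}
\sum_{n \in \mathcal{P}} e^{2\pi i (nx + n^2 t)} = \sum_{l=1}^{k} e^{2\pi i (lqx + l^2 q^2 t)} = w_k(x,t),
\end{equation*}
where, following the notation around \eqref{eq:inductiveWeyl}, $w_k(x,t) = w(qx, q^2 t)$ with $w(y,s) = \sum_{l=1}^{k} e^{2\pi i (ly + l^2 s)}$ a quadratic Weyl sum at scale $k \le \lfloor N/q \rfloor$. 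Thus $\sup_{0 < t < 1} |\sum_{n \in \mathcal{P}} e^{2\pi i (nx + n^2 t)}| = \sup_{0 < t < 1} |w(qx, q^2 t)|$, and since $s \mapsto q^2 t$ ranges over all of $[0, q^2] \supset [0,1]$ as $t$ ranges over $(0,1)$ (and $w(y, \cdot)$ is $1$-periodic), this supremum equals $\sup_{0 < s < 1} |w(qx, s)|$.

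Next I would exploit the $(1/q)$-periodicity in the first variable: the function $x \mapsto \sup_{0<s<1}|w(qx,s)|$ is $(1/q)$-periodic on $\T$. Therefore
\begin{equation*}
\int_{\T} \sup_{0<t<1}\Big|\sum_{n\in\mathcal{P}} e^{2\pi i(nx+n^2t)}\Big|^4 dx = \int_0^1 \sup_{0<s<1}|w(qx,s)|^4\, dx = q \int_0^{1/q} \sup_{0<s<1}|w(qx,s)|^4\, dx,
\end{equation*}
and substituting $y = qx$ gives $\int_0^{1/q} \sup_{0<s<1}|w(qx,s)|^4 dx = q^{-1}\int_0^1 \sup_{0<s<1}|w(y,s)|^4 dy$. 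Hence the left-hand sides cancel the Jacobian factor and we are left with exactly $\|\sup_{0<s<1}|w(y,s)|\|_{L^4(\T)}^4$, which by Theorem \ref{prop:WeylMaximal} applied at scale $k \le N/q$ is at most $(C_\epsilon k^{3/4+\epsilon})^4 \le (C_\epsilon (N/q)^{3/4+\epsilon})^4 \le (C_\epsilon q^{-3/4} N^{3/4+\epsilon})^4$, since $q \ge 1$ means $q^{-\epsilon} \le 1$. Taking fourth roots yields the claim.

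The only point requiring a little care — and the one I would expect to be the main (minor) obstacle — is the bookkeeping of the supremum under the rescaling $t \mapsto q^2 t$: one must confirm that restricting the outer supremum to $0 < t < 1$ on $\T$ (rather than $0 < t < 1/q^2$) does not lose anything, which is immediate from the $1$-periodicity of $t \mapsto w(y, q^2 t)$ once we note $q^2 \ge 1$. Everything else is a direct substitution and an application of Theorem \ref{prop:WeylMaximal} at the scale $N/q$; no induction or number-theoretic input beyond what is already established is needed.
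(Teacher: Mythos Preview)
Your proof is correct and follows essentially the same approach as the paper: rewrite the sum as $w(qx,q^2t)$ with $w$ a Weyl sum at scale $k\le N/q$, use the $(1/q)$-periodicity in $x$ together with a change of variables to reduce to $\|\sup_{s}|w|\|_{L^4(\T)}$, and apply Theorem~\ref{prop:WeylMaximal} at the smaller scale. The only cosmetic difference is that you pass to $\sup_{0<s<1}$ directly via the $1$-periodicity of $w$ in time, whereas the paper first restricts to $0<t<q^{-2}$ and then changes variables; both are equivalent.
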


\begin{proof} We have $$u(x,t) = \sum_{l=1}^{\lfloor N/q \rfloor } e^{2\pi i(lqx + l^2 q^2 t ) } = w(qx, q^2t),$$ where $$w(x,t) = \sum_{l=1}^{\lfloor N/q \rfloor} e^{2\pi i (lx + l^2 t)}.$$ In particular $u(x,t)$ is $(1/q)$-periodic in $x$ and $(1/q^2)$-periodic in $t$. This implies that $$\int_{0}^{1}\sup_{0<t<1} |u(x,t)|^4 dx = q\int_{0}^{1/q}\sup_{0<t<q^{-2}} |w(qx,q^2t)|^4 dx. $$ Making the change of variables $(y,s) = (qx, q^2t)$ then yields $$ \int_{0}^{1}\sup_{0<t<1} |u(x,t)|^4 dx = \int_{0}^{1}\sup_{0<s<1} |w(y,s)|^4 dy.$$ The claimed estimate now follows from Theorem \ref{prop:WeylMaximal}. 
\end{proof}

Note that if $u(x,t)$ is as in the previous corollary then $\|u(\cdot, 0)\|_{L^2} \sim (N/q)^{1/2}$. So the corollary says $$\|\sup_{0<t<1} |u(\cdot, t)\|_{L^4 (\T)} \leq C_{\epsilon}N^{\epsilon} q^{-1/4} N^{1/4+\epsilon} \|u(\cdot, 0)\|_{L^2 (\T)}, $$ which is an improvement over what can be expected in general (since the loss of $N^{1/4}$ is sharp for the complete Weyl sum, for example).

\section{The dimension of $\mathcal{O}_{\alpha}$}\label{sec:dimension}
We now prove Proposition \ref{prop:dim}. To prove the upper bound in the proposition we will apply our maximal estimate to a modification of the Weyl sum, following the completion method used by Chen and Shparlinski (see for example \cite{CS3}, Lemma 2.2). We will also use a fractal version of Theorem \ref{prop:WeylMaximal} which can be obtained by an argument of Eceizabarrena and Luc\`{a} \cite{EL}. To prove the lower bound we will appeal to a classical theorem of Jarn\'{\i}k.

 We begin by defining $$S_N(x,t) = \sum_{h=1}^{N} \frac{1}{h}\big|\sum_{n=1}^{N} e^{2\pi i hn/N} e^{2\pi i (xn + tn^2)} \big| = \sum_{h=1}^{N} \frac{1}{h}|w_N (x + h/N, t)|.$$  Then we have the following `completion' lemma proved in \cite{CS} and \cite{CS3}. 
\begin{lemma}[\cite{CS3}, Lemma 2.2]\label{lem:Slarge} For all $(x,t) \in \T^2$ and any $1 \leq N \leq M$ we have $$|w_{N}(x,t)| \lesssim |S_M (x,t)|.$$ 
\end{lemma}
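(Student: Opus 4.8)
The plan is to recognize the averaged sum $S_M(x,t)$ as (essentially) a convolution of $|w_N|$-type sums against a Dirichlet-type kernel, and exploit the completion identity that the shifts $x + h/N$ for $h = 1, \dots, N$ sample the Fourier coefficients in a way that lets one recover $w_N$ itself. First I would write $w_N(x,t) = \sum_{n=1}^N e^{2\pi i (xn + tn^2)}$ and observe that for fixed $t$ the sequence $n \mapsto e^{2\pi i tn^2}\mathbbm{1}_{[1,N]}(n)$ has a discrete Fourier transform on $\Z/N\Z$ (or on a slightly larger group) whose values are precisely the sums $\sum_n e^{2\pi i hn/N} e^{2\pi i (xn + tn^2)}$ appearing in the definition of $S_N(x,t)$. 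Inverting this relation, $w_N(x,t)$ can be expressed as a weighted average of the $N$ numbers $w_N(x + h/N, t)$ over $h$; the weights are bounded by the $\ell^1$ norm of a Dirichlet kernel on $N$ points, which is $O(\log N)$. This already gives $|w_N(x,t)| \lesssim \log N \cdot \max_h |w_N(x+h/N,t)| \lesssim \log N \cdot S_N(x,t)$.

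To upgrade $S_N$ to $S_M$ for $M \geq N$ — and to remove the logarithm, which is what the statement with $\lesssim$ and no $\log$ demands — I would instead follow the standard completion trick in the form used in \cite{CS} and \cite{CS3}: partition $[1,N]$ into intervals on which $n \mapsto e^{2\pi i tn^2}$ is adequately controlled, or more directly use that for $M \geq N$ one has the pointwise bound coming from summing the geometric-type factor $\sum_{h=1}^{M} h^{-1} e^{2\pi i hn/M}$, which is bounded below (in absolute value, after averaging against $n \in [1,N]$) by a uniform constant when $n$ ranges over $[1,N] \subset [1,M]$. Concretely, the key elementary inequality is that $\big|\sum_{h=1}^{M} \frac{1}{h} e^{2\pi i h m /M}\big| \gtrsim 1$ uniformly for $m$ in a fixed dyadic range, so convolving $w_N$ against this kernel loses only a constant, not a logarithm; then the triangle inequality distributes the kernel weights $1/h$ onto the shifted sums $|w_N(x + h/M, t)|$. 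Since $N \le M$ the relevant shifts all appear in the definition of $S_M$, giving $|w_N(x,t)| \lesssim S_M(x,t)$.

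The main obstacle is getting the constant (rather than a logarithmic factor) correct, i.e. verifying that the kernel $\sum_{h=1}^M h^{-1} e^{2\pi i hm/M}$ has absolute value bounded below by a positive absolute constant on the relevant set of residues $m$ — this is the content of the completion lemma in \cite{CS3}, and is the only genuinely non-formal input. I would simply quote \cite{CS3}, Lemma 2.2 for this, since the excerpt permits assuming it; the remaining steps (the discrete Fourier inversion identity and the triangle inequality) are routine and require no new ideas beyond bookkeeping of which shifts $x + h/M$ are being used.
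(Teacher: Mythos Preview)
The paper gives no proof of this lemma; it simply cites \cite{CS3}, Lemma 2.2, exactly as you ultimately propose to do. So at the level of what actually appears, your proposal and the paper coincide.

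That said, the sketch you give of the completion mechanism is not quite right. In your first paragraph you work on $\Z/N\Z$ and try to recover $w_N$ from the shifts $w_N(x+h/N,t)$; but on $\Z/N\Z$ the indicator $\mathbbm{1}_{[1,N]}$ is identically $1$, so Fourier inversion there is trivial and yields nothing. The whole point of completion is to expand $\mathbbm{1}_{[1,N]}$ on the \emph{larger} group $\Z/M\Z$, which gives
\[
w_N(x,t)=\sum_{h=0}^{M-1} c_h\, w_M(x+h/M,t),\qquad c_h=\frac{1}{M}\sum_{m=1}^{N}e^{-2\pi i hm/M},
\]
and then to bound $|c_h|\lesssim 1/(\min(h,M-h)+1)$ via the geometric-sum estimate. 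It is this decay of the Fourier coefficients of the indicator --- not a lower bound $\big|\sum_{h}h^{-1}e^{2\pi i hm/M}\big|\gtrsim 1$ on a kernel --- that produces the inequality. Your ``key elementary inequality'' is not the operative mechanism, and the subsequent step (``the triangle inequality distributes the kernel weights onto the shifted sums $|w_N(x+h/M,t)|$'') has the roles of $N$ and $M$ swapped relative to what $S_M$ actually contains: $S_M$ is built from $w_M$, not from $w_N$. None of this matters for the paper, which outsources the statement entirely, but if you wanted the sketch to stand on its own you would need to rewrite it along the lines above.
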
 \noindent This lemma makes it easier to pass to dyadic scales, which is the main motivation for working with $S_N$.

\subsection{The upper bound} Recall that a Borel measure $\mu$ on $[0,1]$ is said to be $\beta$-dimensional with $0 < \beta \leq 1$ if $$c_{\beta}(\mu) := \sup_{I \subset \T} \frac{\mu(I)}{|I|^{\beta}} < \infty.$$ Here the supremum is taken over intervals in $[0,1]$. 

The upper bound in Proposition \ref{prop:dim} will follow after combining Theorem \ref{prop:WeylMaximal} with the following two lemmas. The proof will be similar to arguments used to study divergence sets for the Euclidean Schr\"{o}dinger operator in \cite{BBCR}, \cite{DZ}, \cite{LR}. We recall that $\mathcal{H}^{\gamma}$ denotes $\gamma$-dimensional Hausdorff measure. 

\begin{lemma}[Frostman's Lemma] \label{lem:Frostman} Suppose $\mathcal{S} \subset [0,1]$ is a Borel set and $\mathcal{H}^{\gamma}(S) > 0$ with $\gamma > 0$. Then there exists a nonzero $\gamma$-dimensional Borel measure $\mu_{\gamma}$ supported on $\mathcal{S}$. 

\end{lemma}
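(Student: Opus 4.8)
The statement is the classical Frostman lemma, and the plan is to reproduce its standard dyadic proof. First I would reduce to the case of a compact set: since $\mathcal{S}$ is Borel with $\mathcal{H}^{\gamma}(\mathcal{S}) > 0$, the inner regularity of Hausdorff measure on Borel sets (a standard fact in geometric measure theory) provides a compact $K \subset \mathcal{S}$ with $0 < \mathcal{H}^{\gamma}(K) < \infty$, and it suffices to build a nonzero finite Borel measure $\mu$ with $\mathrm{supp}\,\mu \subset K$ and $\mu(I) \lesssim |I|^{\gamma}$ for every interval $I \subset [0,1]$. Passing to $K$ also lets us replace $\mathcal{H}^{\gamma}(K)$ by the Hausdorff content $\mathcal{H}^{\gamma}_{\infty}(K)$, which is still positive.

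The measure will be produced by a mass-distribution construction. For each integer $m \geq 1$, define a measure $\nu_m$ as follows: place on each dyadic subinterval $Q$ of $[0,1]$ of length $2^{-m}$ meeting $K$ a multiple of Lebesgue measure of total mass $|Q|^{\gamma}$, and then sweep through the dyadic scales from finest to coarsest, rescaling the measure inside any dyadic interval $Q$ with current mass $> |Q|^{\gamma}$ by the factor $|Q|^{\gamma}/\nu_m(Q)$. At the end of the sweep $\nu_m(Q) \leq |Q|^{\gamma}$ for every dyadic interval $Q$, $\nu_m$ is supported in the $2^{-m}$-neighborhood of $K$, and $\nu_m([0,1]) \leq 1$. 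I would then pass to a weak-$*$ convergent subsequence $\nu_{m_k} \rightharpoonup \mu$; the shrinking supports give $\mathrm{supp}\,\mu \subset K$, the dyadic bound passes to the limit as $\mu(Q) \lesssim |Q|^{\gamma}$, and since every interval $I$ is contained in a bounded number of dyadic intervals of length comparable to $|I|$ this upgrades to $\mu(I) \lesssim |I|^{\gamma}$, i.e. $c_{\gamma}(\mu) < \infty$.

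The step I expect to be the main obstacle — and the only place where the hypothesis on $\mathcal{H}^{\gamma}$ genuinely enters — is showing $\mu \neq 0$, i.e. the uniform lower bound $\nu_m([0,1]) \geq \mathcal{H}^{\gamma}_{\infty}(K)$. Here I would argue: for each $x \in K$, let $Q_x$ be the shallowest of the nested dyadic intervals through $x$ (down to scale $2^{-m}$) that gets rescaled at some point of the sweep, taking $Q_x$ to be the finest one if none is rescaled; a short check — using that the sweep goes from fine to coarse and only ever decreases masses — shows $\nu_m(Q_x) = |Q_x|^{\gamma}$ at the end. The maximal intervals among $\{Q_x : x \in K\}$ then form a disjoint dyadic cover $\{Q_i\}$ of $K$ with
\[
\sum_i |Q_i|^{\gamma} \;=\; \sum_i \nu_m(Q_i) \;=\; \nu_m\Big( \bigcup_i Q_i \Big) \;\leq\; \nu_m([0,1]),
\]
and since $\{Q_i\}$ is in particular a cover of $K$ the left side is $\geq \mathcal{H}^{\gamma}_{\infty}(K) > 0$. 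Letting $k \to \infty$ gives $\mu(K) = \mu([0,1]) \geq \mathcal{H}^{\gamma}_{\infty}(K) > 0$, so (after normalizing if desired) $\mu$ is the required $\gamma$-dimensional Borel measure supported on $\mathcal{S}$.
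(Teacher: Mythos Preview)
Your proposal is correct and reproduces the standard dyadic mass-distribution proof of Frostman's lemma. The paper itself does not prove this lemma at all; it simply cites \cite{F} and \cite{M} for a proof, and the argument you outline is essentially the one found in those references.
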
  

\begin{lemma}[\cite{EL}]\label{lem:fractalMax} Fix $\epsilon > 0 $ and suppose $f \in H^{1/4 + \epsilon}(\T)$ is a function such that $$\big\| \sup_{0 < t < 1} |e^{it\Delta_{\T}} f| \|_{L^4 (\T)} \lesssim_{\epsilon} \|f\|_{H^{1/4 + \epsilon}}.$$ Then for any $\gamma$-dimensional measure $\mu_{\gamma}$ on $\T$ we also have $$\big\| \sup_{0 < t < 1} |e^{it\Delta_{\T}} f | \|_{L^4 (\T, d\mu_{\gamma})} \lesssim c_{\gamma}(\mu_{\gamma})^{1/4} \|f\|_{H^{s}}, \ \ \ \ \ s > \frac{1}{2} - \frac{\gamma}{4} $$
\end{lemma}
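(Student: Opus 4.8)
The plan is to deduce Lemma \ref{lem:fractalMax} from the Lebesgue-measure maximal estimate by an averaging/interpolation argument over dyadic scales, following the scheme of Eceizabarrena and Luc\`{a} \cite{EL}. First I would decompose the time interval $(0,1)$ into dyadic pieces and, for each fixed time-scale, further localize the solution in space. The key point is that a $\gamma$-dimensional measure $\mu_\gamma$ assigns to any interval $I$ mass at most $c_\gamma(\mu_\gamma)|I|^\gamma$, which is much smaller than the Lebesgue mass $|I|$ when $\gamma < 1$; so if one can show that the maximal function is, after suitable localization, essentially constant on intervals of a controlled length $\ell$, then integrating $d\mu_\gamma$ over a union of such intervals costs a factor $c_\gamma(\mu_\gamma) \ell^{\gamma - 1}$ relative to integrating $dx$. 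Quantitatively, one writes $e^{it\Delta_\T}f = \sum_k e^{it\Delta_\T} f_k$ where $\widehat{f_k}$ is supported on $|n| \sim 2^k$, and for the piece at frequency $2^k$ the relevant spatial scale is $\ell \sim 2^{-k}$ (by Bernstein / the locally-constant heuristic, the $2^k$-frequency-localized solution does not vary much on intervals of length $2^{-k}$, uniformly in $t$).

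The second step is to run this comparison carefully. Fix the frequency block $2^k$ and cover $\T$ by $\sim 2^k$ intervals $I$ of length $2^{-k}$. On each $I$, $\sup_{0<t<1}|e^{it\Delta_\T}f_k|$ is comparable (up to harmless constants, or up to passing to a slightly enlarged frequency support) to its average, so
\begin{equation}
\int_\T \sup_{0<t<1}|e^{it\Delta_\T}f_k|^4 \, d\mu_\gamma \lesssim \sum_I \mu_\gamma(I) \sup_{x\in I}\sup_{0<t<1}|e^{it\Delta_\T}f_k(x)|^4 \lesssim c_\gamma(\mu_\gamma)\, 2^{-k\gamma} \sum_I \sup_{x\in I}\sup_t |e^{it\Delta_\T}f_k|^4.
\end{equation}
Comparing $\sum_I \sup_{x\in I}(\cdots)$ with $2^{k}\int_\T \sup_t|e^{it\Delta_\T}f_k|^4\,dx$ and applying the hypothesized Lebesgue estimate (valid for the frequency-localized data with $\|f_k\|_{H^{1/4+\epsilon}} \sim 2^{k(1/4+\epsilon)}\|f_k\|_{L^2}$) gives, schematically,
\begin{equation}
\Big\| \sup_{0<t<1} |e^{it\Delta_\T}f_k|\Big\|_{L^4(d\mu_\gamma)}^4 \lesssim c_\gamma(\mu_\gamma)\, 2^{k(1-\gamma)} \cdot 2^{k(1+4\epsilon)} \|f_k\|_{L^2}^4,
\end{equation}
i.e. a gain of $2^{k(1-\gamma)/4}$ over the Lebesgue case (recovered at $\gamma=1$). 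This corresponds exactly to a Sobolev exponent $s$ with $4s > 2 - \gamma + O(\epsilon)$, matching $s > \tfrac12 - \tfrac{\gamma}{4}$ once $\epsilon$ is absorbed.

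The final step is to sum over the dyadic frequency blocks $k$. One uses the triangle inequality in $L^4(d\mu_\gamma)$ together with the $\ell^2$ orthogonality $\sum_k \|f_k\|_{L^2}^2 \sim \|f\|_{L^2}^2$; the slight excess in the Sobolev exponent ($s$ strictly larger than $\tfrac12 - \tfrac\gamma4$) provides the geometric decay in $k$ needed to sum the series, which is precisely why one cannot reach the endpoint. The main obstacle I expect is making the ``locally constant on intervals of length $2^{-k}$, uniformly in $t\in(0,1)$'' assertion rigorous for the periodic Schr\"odinger evolution: one cannot literally replace $\sup_t|e^{it\Delta_\T}f_k(x)|$ by its average over $I$, because the supremum is taken pointwise in $x$ before one localizes. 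The standard fix (as in \cite{EL}) is to insert a smooth spatial cutoff adapted to $I$ and use that the Fourier support of the localized piece is essentially unchanged, so that on $I$ the function genuinely varies slowly; alternatively one works directly with the maximal function and dominates $\sup_{x\in I}\sup_t$ by a constant multiple of $\big(2^{k}\int_I \sup_t|\cdots|^4\big)^{1/4}$ via Bernstein applied to the frequency-$2^k$ piece. Handling this reduction cleanly, and tracking that the loss is exactly $c_\gamma(\mu_\gamma)^{1/4}$ with the stated $s$, is the technical heart of the argument; everything else is bookkeeping over dyadic scales.
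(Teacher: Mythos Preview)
Your sketch is essentially the argument of \cite{EL} that the paper cites; the paper itself gives no proof beyond that citation, so you are reproducing the intended approach. Two small points. First, the lemma's hypothesis is the Lebesgue maximal bound for the \emph{single} function $f$, yet your Littlewood--Paley argument invokes the Lebesgue bound for each piece $f_k$; that does not follow from the hypothesis as literally stated. In the paper's application this is harmless because the data is already frequency-localized to $[1,N]$ (so only one $k$ matters and no summation is needed), or equivalently because Theorem~\ref{prop:WeylMaximal} is available at every dyadic scale; but for the lemma as written you should either assume the Lebesgue estimate scale-by-scale or restrict to band-limited $f$. Second, the dyadic decomposition of the \emph{time} interval you mention in your opening sentence plays no role---the argument is a pure frequency decomposition, and the locally-constant step is uniform in $t\in(0,1)$, exactly as you later describe.
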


\noindent For a proof of Lemma \ref{lem:Frostman} see \cite{F} or \cite {M}. Lemma \ref{lem:fractalMax} is a special case of the proof of Proposition 5.2 in \cite{EL}.

We now fix $3/4 \leq \alpha \leq 1$ and suppose that  $\mathcal{H}^{\gamma}({O}_{\alpha}) > 0$ for $\gamma > 0$. Then by Lemma \ref{lem:Frostman} there exists a nonzero $\gamma$-dimensional Borel measure $\mu_{\gamma}$ supported on $\mathcal{O}_{\alpha}$. Note that if we define $$\mathcal{O}_{\alpha}(N) = \{x\in \T : \sup_{0 < t < 1}|w_{N}(x,t)| \geq N^{\alpha} \} $$ then we have \begin{equation}\label{eq:union2} \mathcal{O}_{\alpha} = \bigcap_{M \geq 1} \bigcup_{N \geq M} \mathcal{O}_{\alpha}(N).\end{equation} We fix an integer $M > 1$. For each $x \in \mathcal{O}_{\alpha}$ let $$N(x) = \min\{N\geq M : \sup_{0<t < 1}|w_{N}(x,t)| \geq N^{\alpha }\}.$$ Given $j \geq 1,$ define $$E_{j} = \{x\in \mathcal{O}_{\alpha} : N(x) \in [2^{j-1}M, 2^{j}M) \}.$$ The sets $E_j$ are pairwise disjoint by construction, and from \eqref{eq:union2} we have $\mathcal{O}_{\alpha} \subset \bigcup_{j\geq 1} E_j$ for each $M \geq 1$. Note that for each $x \in E_j$ we have $$1 \leq N(x)^{-\alpha}\sup_{t} |w_{N(x)}(x,t)| \lesssim 2^{-j\alpha}M^{-\alpha}\sup_{t} S_{2^j M}(x,t), $$ which follows from the definition of the $E_j$ and Lemma \ref{lem:Slarge}.  Therefore 
\begin{equation}\label{eq:measure1} \mu_{\gamma}(\T)^{1/4} = \big(\int_{\T}1 \ d\mu_{\gamma}(x) \big)^{1/4} \lesssim \big(\sum_{j=1}^{\infty}2^{-4\alpha j}M^{-4\alpha} \int_{E_j} \sup_{0< t < 1} |S_{2^{j}M}(x,t)|^4 \ d\mu_{\gamma}(x)\big)^{1/4}.
 \end{equation} Recall that if $\mu$ is a $\gamma$-dimensional measure and $\mu^{t}$ is a translate of $\mu$ then $\mu^{t}$ is also  $\gamma$-dimensional, with $c_{\gamma}(\mu^{t}) = c_{\gamma } (\mu)$. We therefore conclude from Theorem \ref{prop:WeylMaximal} and Lemma \ref{lem:fractalMax} that for any $\epsilon > 0$ we have \begin{align*} \big(\int_{E_j} \sup_{0 < t < 1} |S_{2^{j}M}(x,t)|^4 \ d\mu_{\gamma}(x)\big)^{1/4} &\leq \sum_{h=1}^{2^{j}M} \frac{1}{h} \big(\int_{E_j} \sup_{0 < t < 1} |w_{2^{j}M}(x + h/(2^{j}M),t)|^4 \ d\mu_{\gamma}(x)\big)^{1/4}   \\  &\leq C_{\epsilon}2^{j\epsilon}M^{\epsilon} 2^{j(1-\gamma/4)}M^{1-\gamma/4}. \end{align*} Inserting this into \eqref{eq:measure1} yields
 \begin{equation}\label{eq:measure2} \mu_{\gamma}(\T)^{1/4} \lesssim_{\epsilon} M^{1 -\alpha - \gamma/4 + \epsilon } \big(\sum_{j=1}^{\infty}2^{-j(4\alpha - 4 + \gamma - \epsilon )} \big)^{1/4}.
 \end{equation}  Let us now suppose that $\gamma > 4(1-\alpha)$, say $\gamma = 4(1-\alpha) + \delta$. Then from \eqref{eq:measure2} we get \begin{equation}\label{eq:measure3} \mu_{\gamma}(\T)^{1/4} \lesssim_{\epsilon} M^{-\delta/4 + \epsilon}\big(\sum_{j=1}^{\infty} 2^{-j(\delta - \epsilon )} )^{1/4} \lesssim_{\delta} M^{-\delta/8}
 \end{equation} if we choose $\epsilon = \delta/8$. But the implicit constant in \eqref{eq:measure3} is independent of $M$ and $M \geq 1$ was arbitrary, so we obtain a contradiction by letting $M \rightarrow \infty$. 
 
 Therefore $\mathcal{H}^{\gamma}(\mathcal{O}_{\alpha}) = 0$ for $\gamma > 4(1-\alpha).$ It follows that the dimension of $\mathcal{O}_{\alpha}$ is less than or equal to $4(1-\alpha)$, completing the proof.
 
 \subsection{The lower bound} The argument in this case is similar to the approach used by Chen and Shparlinski in \cite{CS2}, the main idea being that Weyl sums are known to be large at points which are well-approximated by certain rational numbers. There are also similar ideas used in \cite{OC} to estimate the dimension of sets related to the function $\sum_{n} \frac{1}{n} e^{2\pi i (xn + tn^2)},$ as mentioned in Section \ref{sec:otherWeyl}.  
 
  We will use the following theorem of Jarn\'{\i}k that gives the dimension of sets which are well-approximated by rationals. \begin{thm}[Jarn\'{\i}k] \label{thm:Jarnik} Suppose $\beta > 0$ and let $$\mathcal{J}_{\beta} = \{x\in \T : |x-a/q| \leq q^{-(2+\beta)} \text{ for infinitely many rationals } a/q \}.$$ Then the Hausdorff dimension of $\mathcal{J}_{\beta}$ is $\frac{2}{2+\beta}$. 
 \end{thm}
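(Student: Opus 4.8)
The plan is to prove the matching bounds $\dim(\mathcal J_\beta)\le\frac{2}{2+\beta}$ and $\dim(\mathcal J_\beta)\ge\frac{2}{2+\beta}$ separately; this is the classical theorem of Jarn\'{\i}k, and I only sketch the standard argument. For the upper bound, write $B(y,r)$ for the interval of radius $r$ about $y$ and note $\mathcal J_\beta=\bigcap_{M\ge1}\bigcup_{q\ge M}\bigcup_{\substack{0\le a<q\\(a,q)=1}}B(a/q,q^{-(2+\beta)})$. Grouping denominators into dyadic blocks $q\sim Q$, the number of reduced fractions in $[0,1]$ with $q\sim Q$ is $\sum_{q\sim Q}\varphi(q)\lesssim Q^{2}$, and each associated interval has length $\lesssim Q^{-(2+\beta)}$. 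Hence for every $s>\frac{2}{2+\beta}$ and every $M$, the $s$-dimensional Hausdorff sum of the cover $\bigcup_{q\ge M}$ is $\lesssim\sum_{Q\ \mathrm{dyadic},\,Q\ge M}Q^{2}Q^{-s(2+\beta)}$, which converges (as $s(2+\beta)>2$) and tends to $0$ as $M\to\infty$. So $\mathcal H^{s}(\mathcal J_\beta)=0$ for all such $s$, which gives the upper bound.

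For the lower bound I would construct a Cantor-type subset of $\mathcal J_\beta$ carrying a Frostman measure. Fix $s<\frac{2}{2+\beta}$ and a rapidly increasing sequence of denominator scales $Q_1\ll Q_2\ll\cdots$ to be specified. Build nested families $\mathcal I_0\supset\mathcal I_1\supset\cdots$ of intervals: $\mathcal I_0=\{[0,1]\}$, and given $I\in\mathcal I_{k-1}$, take as the children of $I$ the intervals $B(a/q,Q_k^{-(2+\beta)})$ as $a/q$ runs over the $\gtrsim Q_k^{2}|I|$ reduced fractions in $I$ with $q\sim Q_k$ (discarding the negligibly many lying within $Q_k^{-(2+\beta)}$ of $\partial I$). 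Since consecutive Farey fractions of order $\le 2Q_k$ are $\gtrsim Q_k^{-2}$-separated and $Q_k^{-2}\gg Q_k^{-(2+\beta)}$, these child intervals are pairwise disjoint and contained in $I$. Then $\mathcal C:=\bigcap_k\bigcup_{I\in\mathcal I_k}I\subset\mathcal J_\beta$. Equip $\mathcal C$ with the natural probability measure $\mu$ that splits each parent's mass equally among its children. A careful estimate of $\mu(B(x,r))$ for all small $r$ --- using that the children of a fixed parent are $\gtrsim Q_k^{-2}$-separated to bound how many a ball of radius $r$ can meet --- yields $\mu(B(x,r))\lesssim r^{s}$ provided the $Q_k$ grow fast enough relative to $Q_1^{\beta}\cdots Q_{k-1}^{\beta}$. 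The mass distribution principle then gives $\dim(\mathcal C)\ge s$, and letting $s\uparrow\frac{2}{2+\beta}$ finishes. Alternatively, since Dirichlet's theorem makes $\limsup B(a/q,q^{-2})$ a set of full Lebesgue measure, one may instead invoke the Mass Transference Principle of Beresnevich and Velani with the contraction $q^{-2}\mapsto(q^{-2})^{(2+\beta)/2}=q^{-(2+\beta)}$, which immediately gives $\mathcal H^{2/(2+\beta)}(\mathcal J_\beta)=\infty$.

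The upper bound is a routine Borel--Cantelli-type computation. The main obstacle is the lower bound, where two points need care: first, the number-theoretic input that each interval of the construction contains $\gtrsim Q_k^{2}|I|$ well-spaced reduced fractions with denominator in the dyadic window $q\sim Q_k$ (coming from the distribution of Farey fractions, equivalently the three-distance theorem); and second, verifying the Frostman bound $\mu(B(x,r))\lesssim r^{s}$ uniformly across the transition regime where $r$ lies between the child scale $Q_k^{-(2+\beta)}$ and the parent scale $Q_{k-1}^{-(2+\beta)}$ --- this is the technical heart of the argument, and it forces the $Q_k$ to be chosen along a sufficiently lacunary sequence.
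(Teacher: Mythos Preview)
Your sketch is correct and follows the classical route to Jarn\'{\i}k's theorem. Note, however, that the paper does not give its own proof of this statement: it is quoted as a known result, with the reader referred to Theorem~10.3 in Falconer's book \cite{F2} or Section~9 of Wolff's notes \cite{W}. The argument in those references is precisely the one you outline --- a Borel--Cantelli covering for the upper bound, and a Cantor-type construction with a Frostman measure (via the mass distribution principle) for the lower bound --- so your proposal matches the standard proof the paper has in mind. Your alternative appeal to the Mass Transference Principle is also valid and, if anything, slightly more efficient for the lower bound, though it postdates the classical treatment.
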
 \noindent See for example Theorem 10.3 in \cite{F2} or \cite{W} Section 9 for a proof. As observed in \cite{CS2}, the argument in \cite{F2} proves the the following stronger result.  
\begin{thm}\label{thm:JarnikOdd} Suppose $\beta > 0$ and let $$\mathcal{J}^{\text{odd} }_{\beta} = \{x\in \T : |x-a/q| \leq q^{-(2+\beta)} \text{ for infinitely many rationals } a/q, \ q \text{ odd} \}.$$ Then the Hausdorff dimension of $\mathcal{J}^{\text{odd}}_{\beta}$ is $\frac{2}{2+\beta}$. 
\end{thm}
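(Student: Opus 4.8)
The plan is to deduce Theorem~\ref{thm:JarnikOdd} from the classical Jarn\'{\i}k theorem (Theorem~\ref{thm:Jarnik}) by inspecting the proof of the latter rather than by a black-box reduction. The upper bound $\dim(\mathcal{J}^{\text{odd}}_\beta) \leq \frac{2}{2+\beta}$ is immediate, since $\mathcal{J}^{\text{odd}}_\beta \subset \mathcal{J}_\beta$ and Hausdorff dimension is monotone under inclusion. So the entire content is the lower bound $\dim(\mathcal{J}^{\text{odd}}_\beta) \geq \frac{2}{2+\beta}$, and here I would follow the standard construction (as in \cite{F2}, Theorem 10.3, or \cite{W}, Section~9): one builds a Cantor-type subset of $\mathcal{J}_\beta$ together with a mass distribution on it, and then applies the mass distribution principle to bound its dimension from below. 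The key observation is that this construction has enough freedom to be carried out using only rationals with odd denominator.

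Concretely, the Cantor set is built in stages: at stage $k$ one has a collection of intervals, and inside each one selects a subinterval centered at a fractional point $a/q$ of suitably controlled denominator $q$, of radius comparable to $q^{-(2+\beta)}$, so that every point surviving all stages is within $q^{-(2+\beta)}$ of infinitely many such $a/q$. The freedom one has is in the choice of admissible denominators $q$ at each stage: the classical argument only needs that there are \emph{many} reduced fractions $a/q$ with $q$ in a prescribed dyadic-type range lying in a given interval, and this abundance persists if one restricts to $q$ odd (equivalently, one works with denominators drawn from, say, $q \in \{2^j+1, \dots, 2^{j+1}-1\} \cap (2\Z+1)$ at stage depending on $j$). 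The counting estimates for reduced fractions with odd denominator in an interval differ from the unrestricted count only by a constant factor (a density-$\tfrac{1}{2}$-type loss coming from the arithmetic of the moduli), which is harmless since the mass distribution principle is insensitive to multiplicative constants in the number of children at each stage. Thus the mass distribution constructed on the odd-denominator Cantor set satisfies the same Frostman-type local estimate $\mu(B(x,r)) \lesssim r^{\frac{2}{2+\beta} - \epsilon}$ for every $\epsilon > 0$, and the mass distribution principle gives $\dim(\mathcal{J}^{\text{odd}}_\beta) \geq \frac{2}{2+\beta} - \epsilon$; letting $\epsilon \to 0$ finishes the argument.

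Rather than reproducing the full Cantor construction, I would phrase the proof as: the lower bound in \cite{F2}, Theorem~10.3 (or \cite{W}, Section~9) is obtained by constructing, for each $\epsilon>0$, a compact subset $K_\epsilon \subset \mathcal{J}_\beta$ carrying a probability measure $\mu_\epsilon$ with $\mu_\epsilon(I) \lesssim |I|^{\frac{2}{2+\beta}-\epsilon}$ for all intervals $I$; inspecting that construction, every rational $a/q$ used can be taken with $q$ odd, so in fact $K_\epsilon \subset \mathcal{J}^{\text{odd}}_\beta$, and the conclusion follows from the mass distribution principle exactly as before. The main (and essentially only) obstacle is verifying that the arithmetic inputs of the construction survive the odd-denominator restriction --- namely that the relevant intervals still contain enough reduced fractions with odd denominator --- which reduces to an elementary counting estimate showing that restricting the modulus to be odd costs only a constant factor. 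This is the point to which \cite{CS2} appeals, and I would cite that observation and the proof in \cite{F2} rather than redoing the counting from scratch.
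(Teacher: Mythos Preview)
Your proposal is correct and matches the paper's approach exactly: the paper does not give a self-contained proof but simply states (following \cite{CS2}) that the argument in \cite{F2}, Theorem~10.3, goes through unchanged when one restricts to odd denominators. Your write-up is in fact a more detailed version of that same observation, spelling out why the upper bound is immediate from $\mathcal{J}^{\text{odd}}_\beta \subset \mathcal{J}_\beta$ and why the Cantor/mass-distribution construction for the lower bound survives the odd-denominator restriction.
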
 \noindent In fact we can further restrict to primes $q$. 

The following `major arc' estimate makes clear the role of the Jarn\'{\i}k theorem in our analysis. \begin{prop}\label{prop:WeylLower} Fix $ N > 1$ and supose $1 \leq q \leq N^{1/2}$ is an odd integer. Suppose $a/q, b/q$ are rationals with $1 \leq a,b < q$ and $(a,q) =  1$. Suppose $|x- b/q| \leq \frac{1}{100N}$ and $|t - a/q| \leq \frac{1}{100N^2}.$ Then $$\bigg| \sum_{n=1}^{N} e^{2\pi i (nx + n^2 t)} \bigg|\geq c \frac{N}{q^{1/2}}. $$
\end{prop}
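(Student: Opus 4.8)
The plan is to evaluate the Weyl sum near the major arc $(b/q, a/q)$ by a classical Gauss-sum argument: break the sum into residue classes modulo $q$, factor out the quadratic Gauss sum, and recognize the remaining inner sum as essentially an integral over the fractional part of $n$ in each class. First I would write $n = \ell q + r$ with $1 \le r \le q$ and $0 \le \ell \le \lfloor N/q\rfloor$, so that
\[
\sum_{n=1}^{N} e^{2\pi i (nx + n^2 t)} = \sum_{r=1}^{q} e^{2\pi i (r^2 a/q + rb/q)}\sum_{\ell} e^{2\pi i \big( (\ell q + r)(x - b/q) + (\ell q + r)^2 (t - a/q)\big)} + O(q),
\]
using $(\ell q + r)^2 a/q \equiv r^2 a/q \pmod 1$ and similarly for the linear term. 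Since $|x - b/q| \le 1/(100N)$ and $|t - a/q| \le 1/(100N^2)$, the phase of the inner sum varies by at most $O(1)$ as $\ell$ ranges over its full length $\sim N/q$ (here it is important that $q \le N^{1/2}$, so that $\ell^2 q^2 |t - a/q| \lesssim N^2 \cdot N^{-2} = O(1)$). Hence each inner sum is $\gtrsim N/q$ in absolute value, provided the constant $1/100$ is small enough that the phase stays within, say, a $1/10$-neighborhood of a constant.

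The remaining point is to lower bound the complete exponential sum $G := \sum_{r=1}^{q} e^{2\pi i (a r^2 + b r)/q}$. This is a quadratic Gauss sum, and for $q$ odd and $(a,q) = 1$ one has the exact evaluation $|G| = q^{1/2}$: complete the square using the inverse of $2a$ modulo $q$ (which exists since $q$ is odd and $(a,q)=1$), reducing $G$ to a shifted standard Gauss sum $\sum_r e^{2\pi i a' r^2/q}$ with $(a',q)=1$, whose modulus is $q^{1/2}$ by the classical Gauss sum formula. Combining, the main term has size $\gtrsim q^{1/2} \cdot N/q = N/q^{1/2}$, which dominates the error $O(q) \le O(N^{1/2}) \ll N/q^{1/2}$ for $q \le N^{1/2}$ (in fact one should argue this more carefully when $q$ is close to $N^{1/2}$, but there $N/q^{1/2} \ge N^{3/4}$ while the error is $O(N^{1/2})$, so it is still negligible).

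The main obstacle is the bookkeeping of the two error terms: the $O(q)$ from incomplete residue classes, and the deviation of the inner sum from a perfect geometric-type sum caused by the small but nonzero perturbations $x - b/q$ and $t - a/q$. The key technical input is choosing the constant $1/100$ small enough — and the reduction $q \le N^{1/2}$ — so that the oscillation of the inner phase is controlled and each inner sum retains a definite sign (more precisely, lies in a fixed convex cone), so that no cancellation occurs between the $\ell$ terms. Once the inner sums are shown to be $\sim N/q$ in a uniform direction (up to a bounded twist by $r$, which is absorbed into redefining the Gauss sum coefficients without changing $|G|$), the Gauss-sum evaluation finishes the proof. I would also note that the restriction to odd $q$ is exactly what makes the Gauss sum evaluation clean; for even $q$ there are parity obstructions on $a$, consistent with the remark in the introduction.
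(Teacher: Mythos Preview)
The paper does not actually prove this proposition; it simply cites Oh's note [O]. Your outline is the standard major-arc argument (residue decomposition plus Gauss-sum evaluation), which is almost certainly what appears in that reference, so there is no meaningful difference in approach to report.

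That said, one step in your sketch is not quite tight enough as written. Knowing that each inner sum $I_r = \sum_\ell e^{2\pi i\phi_r(\ell)}$ satisfies $|I_r|\sim N/q$ and lies ``in a uniform direction'' only gives $I_r = L + O(L)$ with an error of size $cL$ (here $c \approx 2\pi\cdot 2/100$). Summing against the Gauss characters then produces an error of size $O(cqL)=O(cN)$, which swamps the main term $|G|L = Nq^{-1/2}$ when $q$ is near $N^{1/2}$. The phrase ``bounded twist by $r$, absorbed into redefining the Gauss sum coefficients'' does not rescue this: the $r$-dependent twist is not of the algebraic form $e^{2\pi i(\alpha r + \beta r^2)/q}$ with integer $\alpha,\beta$, so it does not leave $|G|$ invariant.

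The fix is exactly what you allude to with ``essentially an integral'': because $g(m)=e^{2\pi i(m\xi+m^2\tau)}$ has total variation $O(1)$ on $[1,N]$ (indeed $|g'(m)|\lesssim N^{-1}$), one has $I_r = q^{-1}\int_0^N g(m)\,dm + O(1)$ \emph{uniformly in $r$}, so the $I_r$ agree with a common value up to $O(1)$ rather than up to $O(L)$. This yields
\[
\sum_{r=1}^q e^{2\pi i(ar^2+br)/q} I_r \;=\; G\cdot q^{-1}\!\int_0^N g + O(q),
\]
and now the error $O(q)\le O(N^{1/2})$ is genuinely dominated by $|G|\cdot q^{-1}|\int_0^N g|\gtrsim Nq^{-1/2}\ge N^{3/4}$. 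Once you make this replacement explicit, the rest of your argument (including the Gauss-sum evaluation for odd $q$) goes through cleanly.
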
 \noindent For a proof see \cite{O}, for example. 

 We fix a small parameter $\delta > 0$. Let's first suppose that $3/4 < \alpha < 1$. Then if $q$ is an integer chosen large enough (depending only on $\alpha$) we can find an integer $N_q$ such that $$100(N_q + 1) > q^{\frac{1}{2(1-\alpha)}} > 100 N_q  $$ and $$ q < N_{q}^{1/2}.$$ Now suppose that $|x-a/q|\leq q^{-\frac{1}{2(1-\alpha)}}.$ Then we also have $|x-a/q| < (100N_q)^{-1}$, and so if $q$ is odd Proposition \ref{prop:WeylLower} implies that $$\sup_{0<t<1}|w_{N_q}(x,t)| \geq cq^{-1/2}N_q \geq c' N_{q}^{-1 + \alpha}N_{q} \geq N_{q}^{\alpha - \delta}. $$
 \noindent We therefore conclude that \begin{equation}\label{eq:containment} \mathcal{J}^{\text{odd}}_{\beta} \subset \mathcal{O}_{\alpha - \delta}, \ \ \ \beta = \frac{4\alpha - 3 }{2(1-\alpha)}.
 \end{equation} Then from \eqref{eq:containment} and Theorem \ref{thm:JarnikOdd} it follows that $$\text{dim} (\mathcal{O}_{\alpha - \delta}) \geq \text{dim}(\mathcal{J}^{\text{odd}}_{\beta}) = \frac{2}{2+ \beta}, \ \ \ \ \beta = \frac{4\alpha - 3 }{2(1-\alpha)}.$$ Now direct calculation shows that $$\frac{2}{2+ \beta} = 4(1-\alpha)  $$ and so we conclude that $$\text{dim} (\mathcal{O}_{\alpha -\delta }) \geq 4(1-\alpha) $$ for arbitrarily small $\delta > 0$. It follows that if $3/4 < \alpha < 1$ and $\delta > 0$ is small enough we have $$\text{dim}(\mathcal{O}_{\alpha}) \geq 4(1-(\alpha + \delta)) = 4(1- \alpha) - 4\delta.$$ The desired lower bound follows by letting $\delta \rightarrow 0$. 
 
 It remains to consider the endpoint case $\alpha  = 3/4$. But this follows from the observation that $\mathcal{O}_{3/4} \supset \mathcal{O}_{3/4 + \epsilon}$ for any $\epsilon > 0$, along with the fact that $$\dim(\mathcal{O}_{3/4 + \epsilon}) = 1 - 4\epsilon $$ as proven above.

\Addresses
\end{document}